\documentclass[11pt]{article}

\usepackage[margin=2.5cm]{geometry}

\usepackage{mathtools}   
\usepackage{amsfonts,amssymb,amsthm}
\usepackage{mathrsfs}
\usepackage{xfrac}
\usepackage{bbold}

\usepackage{graphicx}
\usepackage{tikz}
\usetikzlibrary{arrows,arrows.meta,automata,positioning,shapes,shapes.geometric,trees,bayesnet}
\usepackage{pgfplots}
\pgfplotsset{compat=1.17}

\usepackage{array}
\usepackage{tabularx}
\usepackage{booktabs}
\usepackage{colortbl}
\usepackage{adjustbox}

\usepackage[table]{xcolor}
\usepackage{soul}
\usepackage[normalem]{ulem} 

\usepackage{algorithm}
\usepackage{algpseudocode}

\usepackage{caption}
\usepackage{authblk}

\usepackage{easyReview}
\usepackage{todonotes}

\usepackage{setspace}
\setstretch{1.15}
\usepackage{footmisc}

\usepackage{hyperref}
\usepackage{cleveref}

\newtheorem{Theorem}{Theorem}[section]

\newtheorem{Lemma}[Theorem]{Lemma}

\newtheorem{Proposition}[Theorem]{Proposition}
\newtheorem{Definition}[Theorem]{Definition}
\newtheorem{remark}{Remark}[section]
\newtheorem{example}{Example}[section]

\usepackage{stackengine}
\stackMath

\title{Uniform Value and Decidability in \\Ergodic Blind Stochastic Games}

\author{Krishnendu Chatterjee\textsuperscript{1}\hspace*{0.5 cm} David Lurie\textsuperscript{2,3}\hspace*{0.5 cm} Raimundo Saona\textsuperscript{1}\hspace*{0.5 cm} Bruno Ziliotto\textsuperscript{4}}

\date{} 

\begin{document}

\maketitle 

\renewcommand{\thefootnote}{\arabic{footnote}}

\footnotetext[1]{Institute of Science and Technology Austria, Austria.}
\footnotetext[2]{NyxAir, Paris, France.}
\footnotetext[3]{Paris Dauphine University, PSL Research University, Paris, France.}
\footnotetext[4]{Toulouse School of Economics, Université Toulouse Capitole, Institut de Mathématiques de Toulouse, CNRS, France.}

\begin{abstract}
    We study a class of two-player zero-sum stochastic games known as \textit{blind stochastic games}, where players neither observe the state nor receive any information about it during the game. 
    A central concept for analyzing long-duration stochastic games is the \textit{uniform value}. 
    A game has a uniform value $v$ if for every $\varepsilon>0$, Player 1 (resp., Player 2) has a strategy such that, for all sufficiently large $n$, his average payoff over $n$ stages is at least $v-\varepsilon$ (resp., at most $v+\varepsilon$).
    Prior work has shown that the uniform value may not exist in general blind stochastic games. 
    To address this, we introduce a subclass called \textit{ergodic blind stochastic games}, defined by imposing an ergodicity condition on the state transitions. For this subclass, we prove the existence of the uniform value and provide an algorithm to approximate it, establishing the \textit{decidability} of the approximation problem. Notably, this decidability result is novel even in the single-player setting of Partially Observable Markov Decision Processes (POMDPs).
    Furthermore, we show that no algorithm can compute the uniform value exactly, emphasizing the tightness of our result. 
    Finally, we establish that the uniform value is independent of the initial belief.
    
    \paragraph{Keywords.} Stochastic game, uniform value, finite state, decidability, ergodicity, approximation algorithm.
\end{abstract}

\newpage


\section{Introduction}

    Zero-sum stochastic games, introduced by Shapley \cite{shapley1953stochastic}, model the interactions between two opposing players within an environment characterized by finite state and action spaces. At each stage, both players are in a specific state and simultaneously select a pair of public actions, meaning that each player observes the opponent's choice. The combination of the current state and the chosen action pair determines both the stage reward and the transition to the successor state. At each stage of the game, players are aware of the current state as well as the history of past states and actions. Stochastic games extend single-player stochastic games, termed \textit{Markov decision processes} (MDPs) \cite{puterman1994}.
    
    In the $N$-stage game, where $N$ is a positive integer, Player 1 seeks to maximize the expected Cesàro mean of the stage payoffs $\tfrac{1}{N}\sum_{m=1}^NG_m$, while Player 2 aims to minimize it. 
    The $N$-stage game is known to have a value \cite{v1928theorie}, denoted by $v_N$. 
    Stochastic games with long durations can be analyzed using two main approaches:
    \begin{itemize}
        \item 
            The \textit{asymptotic approach} studies the asymptotic behavior of the sequence $(v_N)$ as $N \rightarrow +\infty$. 
            In \cite{bewley1976asymptotic}, Bewley and Kohlberg proved that, in every stochastic game, $(v_N)$ converges pointwise to a limit, denoted by $v$. 
            However, because the optimal strategies of the $N$-stage game often depend heavily on $N$, the pointwise limit of a sequence of these strategies typically performs poorly.
        \item 
            The \textit{uniform approach} considers the existence of strategies which are approximately optimal in all $N$-stage games, provided that $N$ is big enough.
            Mertens and Neyman~\cite{mertens1981stochastic} established that every stochastic game has a \textit{uniform value} denoted $v$, that is, for every $\varepsilon > 0$, Player 1 (resp., Player 2) has a strategy such that, for all sufficiently large $N$, his average payoff over $N$ stages is at least $v-\varepsilon$ (resp., at most $v+\varepsilon$). 
            Additionally, algorithms exist to compute and approximate the uniform value in stochastic games; see~\cite{oliu2021new} for recent examples.
    \end{itemize}

    Unlike standard stochastic games, \textit{blind stochastic games} assume that players receive no information of the current state.
    The only available information consists of the initial probability distribution of the states and the history of past actions. 
    These games represent the simplest model where players have imperfect information about the states, making them an excellent theoretical benchmark. 
    In the single-player case, this model reduces to \textit{blind Markov decision processes} (blind MDPs), which coincide with probabilistic finite automata and has been extensively studied in computer science~\cite{paz1971introduction,rabin1963probabilistic}.
    There are many applications of probabilistic finite automata.
    For example, succinct specification languages for infinite strings as computations \cite{baier2012probabilistic,chatterjee_decidable_2012}, analysis of sequences in computational biology \cite{durbin1998BiologicalSequenceAnalysis}, and speech processing \cite{mohri_finite-state_1997}.
    
    For two-player blind stochastic games, the $N$-stage value is always well-defined \cite{v1928theorie}.
    However, Ziliotto \cite{ziliotto2016zero} provided an example of such games without a uniform value. 
    Motivated by this problem, this paper aims to identify conditions under which the uniform value exists in blind stochastic games, and moreover, can be \textit{computed}. 
    To formalize the computational question, we consider the concept of \textit{decidability}, which addresses the \textit{existence} of algorithms to solve problems, rather than their \textit{efficiency}.

    This paper introduces \textit{ergodic blind stochastic games}, which have been previously studied for the single-player case in the context of automata theory \cite{chadha2013probabilistic}. 
    Intuitively, in ergodic blind stochastic games, the influence of actions taken in the distant past vanishes over time.
    The formal definition is based on the ergodic properties of products of stochastic matrices \cite{kolmogoroff1931analytischen, seneta2006non}. 
    
    \paragraph{Contributions} Our main contributions to ergodic blind stochastic games are the following:
    \begin{itemize}
        \item 
            \textit{Theorem \ref{BMDPdec}} proves the existence of the uniform value as well as the decidability of its approximation. 
        \item 
            \textit{Theorem \ref{EVU}} demonstrates that computing exactly the uniform value is undecidable. 
        \item 
            \textit{Theorem \ref{0opt}} shows that the uniform value is independent of the initial belief.
    \end{itemize}

    \paragraph{Connection to prior work and novelty} 
    Venel \cite{venel2015commutative} proved that the uniform value exists in blind stochastic games where the transition function satisfies a commutative assumption, but did not provide an algorithm to compute it.
    In the simpler single-player case,
    Rosenberg et al. \cite{rosenberg2002blackwell} proved that the uniform value exists in general. 
    However, in this setting, Madani et al. \cite{madani2003undecidability} proved that no algorithm can compute or even approximate the uniform value.
    
    Blind stochastic games are equivalent to stochastic games defined over the set of beliefs, termed \textit{belief stochastic games}. These games belong to the broader class of stochastic games with Borel sets, which are typically studied under ergodic assumptions \cite{hernandez1991recurrence}. However, in this paper, we do not impose assumptions directly on the belief stochastic game; instead, our conditions are on the data of the blind stochastic game. Therefore, one key contribution of the paper is to identify the assumptions on the data of the blind stochastic game that ensure the dynamics of beliefs exhibit ergodic behavior. 
    
    Ergodicity assumptions are commonly used in the stochastic game literature, so it is natural to consider them in the context of blind stochastic games. 
    In fact, authors in \cite{hoffman1966nonterminating, sobel1971noncooperative, vrieze2003stochastic} considered stochastic games with finite states, while Nowak \cite{nowak1999sensitive} studied stochastic games with denumerable state space. 
    Moreover, this hypothesis has also been extensively studied in the single-player case with a finite state space \cite{alden1992rolling, hopp1987new, wachs2011average} and with a denumerable state space \cite{bean1990denumerable, park1993optimal}. 
    Beyond theory, ergodic assumptions in games have also been considered for applications, such as formal analysis of attacks in crypto-currencies \cite{chatterjee2018ergodic}.
    For a detailed survey of ergodicity in MDPs, we refer the reader to Arapostathis et al. \cite{arapostathis1993discrete} and Puterman \cite{puterman1994}. 
    
    Decidability has been extensively studied for various objective functions \cite{chatterjee2007stochastic,chatterjee2016optimal, chatterjee2016decidable,gimbert2014deciding}. Positive results on decidability exist for certain classes of games, under strong assumptions \cite{chatterjee2010probabilistic} and for other objectives \cite{chatterjee2016decidable, chatterjee2013survey, gimbert2014deciding}. 
    Chatterjee et al. \cite{chatterjee2022finite} demonstrated that the uniform value can be guaranteed using finite-memory strategies in the single-player case. However, because there cannot be a computable bound on the memory size, this result does not lead to decidability.
    
    Our approach draws upon a wide range of matrix theory literature \cite{chevalier2017sets,daubechies1992sets,kolmogoroff1931analytischen,seneta2006non,wolfowitz1963products}. 
    To the best of our knowledge, this is the first result leveraging ergodic properties and matrix theory to establish the existence of the uniform value as well as its decidability for a subclass of blind stochastic games. 
    Moreover, Theorem \ref{EVU} underscores the importance of focusing on approximating the uniform value rather than computing the exact uniform value for blind stochastic games. 
    Furthermore, it draws a clear separation between ergodic blind stochastic games and stochastic games, where computing the exact uniform value is decidable \cite{oliu2021new}. 
    Moreover, these results yield two key consequences: first, the decidability (resp., undecidability) of approximating (resp., computing) the uniform value also applies to the single-player case, i.e., the subclass of ergodic blind MDPs; second, our contributions apply to simpler objective functions such as the reachability objective. Indeed, the reachability objective---defined as the probability of reaching a set of target states---can be represented using the long-run average objective \cite{madani2003undecidability}. 

    \paragraph{Outline} Our paper is organized as follows. 
    In Section \ref{MaC}, we introduce the model of blind stochastic games. 
    In Section \ref{WEBMDPs}, we define the class of \textit{ergodic blind stochastic games} and provide conditions under which a blind stochastic game is ergodic. 
    In Section \ref{proofsresults}, we prove our contributions for ergodic blind stochastic games.
    Lastly, Section \ref{ExtPOMDPs} discusses the challenges of extending our approach to hidden stochastic games, where players receive public signals from the current state \cite{renault2020hidden}.
    
\section{Framework}\label{MaC}

    In Section \ref{model}, we present the model of two-player zero-sum blind stochastic games. Subsequently, in Section \ref{CF}, we define the computational problems of interest. Finally, in Section \ref{BRSG}, we demonstrate how any blind stochastic game can be transformed into an equivalent stochastic game with perfect information and a compact state space, called belief stochastic game.
    
    \paragraph{Notation} Sets are represented by calligraphic letters such as $\mathcal{I},\mathcal{J},\mathcal{H},\mathcal{K}$, and $\mathcal{S}$. 
    Elements within these sets are denoted by lowercase letters, such as $i$, $j$, $h$, $k$, and $s$. 
    Random elements are denoted by uppercase letters, such as $I$, $J$, $H$, $K$, and $S$. 
    For a finite set $\mathcal{C}$, let $\Delta(\mathcal{C})$ be the set of probability distributions over $\mathcal{C}$, and let $\delta_c$ be the Dirac measure at some element $c\in \mathcal{C}$. 
    For integers $a \leq b$, let $[a..b]$ be the set of integers $\{ a, a+1, \ldots, b \}$. The set of real numbers is denoted by $\mathbb{R}$, and the sets of natural numbers and nonzero natural numbers are denoted by $\mathbb{N}$ and $\mathbb{N}^\ast$, respectively. 

    \subsection{Model Description}\label{model}
        
        \paragraph{Framework} A \textit{two-player zero-sum blind stochastic game}, denoted by $\Gamma$, is defined by a $5$-tuple $\Gamma=(\mathcal{K},\mathcal{I},\mathcal{J},$ $p,g)$ where:
        \begin{itemize}
            \item $\mathcal{K}$ is the finite set of states;
            \item $\mathcal{I}$ and $\mathcal{J}$ are the finite action sets of Player 1 and Player 2, respectively;
            \item $p \colon \mathcal{K}\times \mathcal{I}\times\mathcal{J}\rightarrow\Delta(\mathcal{K})$, represented as $p(k^\prime|k,i,j) \coloneqq p(k,i,j)(k^\prime)$, is the probabilistic transition function that gives the probability distribution over the successor states given a state $k\in \mathcal{K}$ and an action pair $(i,j)\in \mathcal{I}\times\mathcal{J}$. We represent by $P(i,j)$ the transition matrix for every action pair $(i,j)\in\mathcal{I}\times \mathcal{J}$;
            \item $g \colon \mathcal{K}\times\mathcal{I}\times \mathcal{J}\rightarrow[0,1]$ is the stage reward function.
        \end{itemize}

        \noindent In contrast to blind stochastic games, (standard) stochastic games \cite{shapley1953stochastic} feature complete observation of the state variable. 

        \paragraph{Outline of the Game} Let $b_1\in\Delta(\mathcal{K})$ be the initial belief. The blind stochastic game starting from $b_1$, denoted by $\Gamma(b_1)$, evolves as follows:
        \begin{itemize}
            \item 
                An initial state $K_1$ is selected according to $b_1$. 
                The players know $b_1$, but do not know $k_1$, the realization of $K_1$.
            \item 
                At each stage $m\geq 1$, Player 1 and Player 2 select simultaneously actions $I_m$ and $J_m$, respectively. 
                This action pair results in the unobserved stage reward $G_m\coloneqq g(K_m,I_m,J_m)$. Subsequently, the next state $K_{m+1}$ is determined according to the transition probability function $p(K_m,I_m,J_m)$. 
                Players receive no information about the environment. Therefore, they do not observe the state $K_{m+1}$ or the reward $G_m$.
        \end{itemize}
    

        \paragraph{Strategies} In a blind stochastic game, at each stage $m \geq 1$, each player remembers both its own and its opponent's past actions, constituting the \textit{history before stage m}. Formally, let $\mathcal{H}_m\coloneqq (\mathcal{I}\times \mathcal{J})^{m-1}$ define the set of histories before stage $m$, with $(\mathcal{I}\times\mathcal{J})^{0}\coloneqq\{\emptyset\}$. A strategy of Player $1$ is a mapping $\sigma\colon\bigcup_{m\geq 1}\mathcal{H}_m\to \Delta(\mathcal{I})$, and a strategy of Player $2$ is a mapping $\pi\colon \bigcup_{m\geq 1}\mathcal{H}_m\to \Delta(\mathcal{J})$. We denote by $\Sigma$ and $\Pi$ the players' respective strategy sets. For $m\geq 1$, we use the notation $\sigma(i|h_m)$ to represent the probability of selecting action $i\in \mathcal{I}$ given the history $h_m\in \mathcal{H}_m$. Similarly, $\pi(j|h_m)$ denotes the probability of taking action $j\in \mathcal{J}$ given the history $h_m\in \mathcal{H}_m$.
            
        \paragraph{Values} We define the classical notion of value for blind stochastic games as follows. Let $\mathbb{P}_{\sigma,\pi}^{b_1}$ be the law induced by the pair of strategies $(\sigma,\pi)$ and the initial belief $b_1$ on the set of plays of the game $\Omega=(\mathcal{K}\times\mathcal{I}\times\mathcal{J})^\mathbb{N}$. Similarly, $\mathbb{E}^{b_1}_{\sigma,\pi}$ represents the expectation with respect to this law. 

        Let $N\in \mathbb{N}^\ast$. The $N$-stage objective of the blind stochastic game given by strategy pair $(\sigma,\pi)$ is defined by
        \begin{equation*}
            \gamma_N^{b_1}(\sigma,\pi)\coloneqq \mathbb{E}_{\sigma,\pi}^{b_1}\left(\dfrac{1}{N}\sum_{m=1}^N G_m\right).
        \end{equation*} 
        The $N$-stage game has a value \cite{v1928theorie}, denoted by $v_N(b_1)$:
        \begin{equation*}
            v_N(b_1)\coloneqq\max_{\sigma\in \Sigma}\min_{\pi\in \Pi}\gamma_N^{b_1}(\sigma,\pi)=\min_{\pi\in \Pi}\max_{\sigma\in \Sigma}\gamma_N^{b_1}(\sigma,\pi).
        \end{equation*}

        \noindent We now define the uniform value in blind stochastic games.
        
        \begin{Definition}[Uniform Value]
            The blind stochastic game $\Gamma$ has a \textit{uniform value} $v\colon\Delta(\mathcal{K})\to [0,1]$ if, for all $b_1\in\Delta(\mathcal{K})$, for all $\varepsilon>0$, there exists $(\sigma^\ast,\pi^\ast)\in \Sigma\times \Pi$ and $\overline{n}\in \mathbb{N}^\ast$, such that for all $N\geq\overline{n}$ and $(\sigma,\pi)\in \Sigma\times \Pi$, we have
            \begin{align*}
                \gamma_N^{b_1}(\sigma^\ast,\pi)\geq v(b_1) - \varepsilon,
            \end{align*}
            and
            \begin{align*}
                \gamma_N^{b_1}(\sigma,\pi^\ast)\leq v(b_1) + \varepsilon.
            \end{align*}
        \end{Definition}

        \noindent The existence of the uniform value implies that $(v_N)$ converges as $N \rightarrow +\infty$. 
        In (standard) stochastic games \cite{shapley1953stochastic}, Mertens and Neyman \cite{mertens1981stochastic} have proved that the uniform value exists, but it might not be the case in blind stochastic games \cite{ziliotto2016zero}. 

    \subsection{Computational Formalism}\label{CF}

        
        A \textit{decision problem} is a binary question that determines whether a specific property holds for a given input. It is said \textit{decidable} if there exists an algorithm that can solve it for all inputs and \textit{undecidable} otherwise. \\
        
        The decision problem of \textit{computing} the uniform value is defined as follows.

        \begin{Definition}[Decision Version of Computing the Uniform Value]
            Let $\Gamma$ be a blind stochastic game with a uniform value and $b_1\in\Delta(\mathcal{K})$. Given $x\in[0,1]$, the problem asks whether $v(b_1)>x$ or $v(b_1)\leq x$ holds.
        \end{Definition}

        In many real-world scenarios, finding exact solutions can be very challenging. 
        To address this issue, approximation algorithms offer practical and efficient solutions by focusing on $\varepsilon$-optimal solutions.
        The study of the approximability of decision problems involves analyzing the trade-off between the quality of the solution and the computational resources. The decision problem of \textit{approximating} the uniform value is defined as follows.
        
        \begin{Definition}[Decision Version of Approximating the Uniform Value]
            Let $\Gamma$ be a blind stochastic game with a uniform value and $b_1 \in \Delta(\mathcal{K})$. 
            Given $x\in[0,1]$ and $\varepsilon>0$, the problem asks whether $v(b_1)>x+\varepsilon$ or $v(b_1)<x-\varepsilon$ holds.
            Otherwise, the problem can either accept or reject.
        \end{Definition}
        
        In \cite{madani2003undecidability}, Madani et al. established the undecidability of computing and approximating the uniform value for blind MDPs. 
        As all negative results apply to the more general class, these undecidability results also apply to blind stochastic games. As a consequence, it becomes essential to propose conditions that identify decidable subclasses of blind stochastic games. 
    
    \subsection{From Blind to Belief Stochastic Games}\label{BRSG}

        In blind stochastic games, players do not observe the current state, while in stochastic games, players have complete information of the state at the beginning of each stage. First, we introduce the definition of history before stage $m$ as follows.
        
        \begin{Definition}[$m$-Stage History] 
            Given a strategy pair $(\sigma,\pi)\in\Sigma\times \Pi$ and an initial belief $b_1\in\Delta(\mathcal{K})$, denote the (random) history at stage $m$ by
            \begin{equation*}
                    H_m\coloneqq(I_1,J_1,I_2,J_2\ldots,I_{m-1},J_{m-1}).
            \end{equation*}
            The random variable $H_m$ takes values in $\mathcal{H}_m=(\mathcal{I}\times\mathcal{J})^{m-1}$.
        \end{Definition}
            
        In blind stochastic games, players remember the history of past actions when deciding on a new action. Unfortunately, the representation of past histories is complex.
        Indeed, the set of possible histories up to stage $m$ grows exponentially with $m$. 
        An alternative approach summarizes all the information from past actions into a probability distribution known as the \textit{belief} over the state space $\mathcal{K}$ (see Mertens et al. \cite{mertens2015repeated}). 
        Formally, we define the $m$-stage belief in blind stochastic games as follows.

        \begin{Definition}[$m$-Stage Belief]
            Given an initial belief $b_1\in\Delta(\mathcal{K})$, a pair of strategies $(\sigma,\pi)\in\Sigma\times \Pi$, and a history $h_m\in \mathcal{H}_m$, the belief at stage $m$ given history $h_m$ is defined by            \begin{equation*}
                b_{m,\sigma,\pi}^{b_1,h_m}(k)\coloneqq\mathbb{P}_{\sigma,\pi}^{b_1}(K_m=k|H_m=h_m).
            \end{equation*}
        \end{Definition}
        
        \noindent When clear from context, we will simplify the notation of the belief as $b_m$, omitting its dependence on $b_1$, $h_m$, $\sigma$, and $\pi$.
        Given a fixed pair of strategies $\sigma$ and $\pi$, a history $h_m$, and an initial belief $b_1$, one can use Bayes' rule to compute $b_m$. 

        \textrm{}
        A common approach to analyzing blind stochastic games involves considering an auxiliary game in which the state corresponds to the belief of the original game and is perfectly observed. Formally, let $b_1\in \Delta(\mathcal{K})$ be an initial belief and consider a blind stochastic game $\Gamma=(\mathcal{K},\mathcal{I},\mathcal{J},p,g)$. The belief stochastic game, denoted by $\mathcal{G}$, is defined by a $5$-tuple $\mathcal{G}=(\Delta(\mathcal{K}),\mathcal{I},\mathcal{J},\overline{p},\overline{g})$, where:
            \begin{itemize}
                \item $\Delta(\mathcal{K})$ is the infinite set of belief states;
                \item $\mathcal{I}$ and $\mathcal{J}$ are the finite sets of actions of Player 1 and Player 2, respectively;
                \item $\overline{p}\colon\Delta(\mathcal{K})\times \mathcal{I}\times \mathcal{J}\rightarrow \Delta(\mathcal{K})$, denoted $\overline{p}(b^\prime|b,i,j)$, is the deterministic transition function that gives the successor belief states given the current belief state $b\in \Delta(\mathcal{K})$ and the action pair $(i,j)\in \mathcal{I}\times\mathcal{J}$;
                \item $\overline{g}\colon\Delta(\mathcal{K})\times\mathcal{I}\times \mathcal{J}\rightarrow[0,1]$ is the stage reward function, defined for all $b\in\Delta(\mathcal{K})$ and $(i,j)\in\mathcal{I}\times\mathcal{J}$ by $\overline{g}(b,i,j)\coloneqq\sum_{k\in\mathcal{K}} b(k)g(k,i,j)$.
            \end{itemize}

        \noindent At each stage, the belief changes according to the action pair taken. For each stage $m\geq 1$ and state $k^\prime\in \mathcal{K}$, the \textit{belief update} is defined as
        \begin{align*}
            b_{m+1}(k^\prime)\coloneqq& \psi(b_{m},i_{m},j_m)\\
                \coloneqq&\sum_{k\in\mathcal{K}}p(k^\prime|k,i_{m},j_m)b_{m}(k),
        \end{align*}
        and the \textit{deterministic transition function} is defined as
        \begin{align*}
            \overline{p}(b_{m+1}|b_m,i_m,j_m)\coloneqq  
            \left\{
            \begin{array}{ll}
                1 & \mbox{if } b_{m+1}=\psi(b_{m},i_{m},j_m) \\
                0 & \mbox{otherwise.}
            \end{array}
            \right.
        \end{align*}
        
        For each stage $m\geq 1$, let $\overline{G}_m\coloneqq \overline{g}(B_m,I_m,J_m)$ denote the stage reward where $B_m\in\Delta(\mathcal{K})$ and $(I_m,J_m)\in\mathcal{I}\times \mathcal{J}$. Let $N\in \mathbb{N}^\ast$. The $N$-stage objective of the belief stochastic game given by strategy pair $(\sigma,\pi)$ is defined by
        \begin{equation*}
            \gamma_N^{b_1,\prime}(\sigma,\pi)\coloneqq \mathbb{E}_{\sigma,\pi}^{b_1}\left(\dfrac{1}{N}\sum_{m=1}^N\overline{G}_m\right),
        \end{equation*} 
        and the $N$-stage value, denoted by $v_N^\prime(b_1)$, is defined by
        \begin{equation*}
            v_N^\prime(b_1)\coloneqq\sup_{\sigma\in \Sigma}\inf_{\pi\in \Pi}\gamma_N^{b_1,\prime}(\sigma,\pi)=\inf_{\pi\in \Pi}\sup_{\sigma\in \Sigma}\gamma_N^{b_1,\prime}(\sigma,\pi).
        \end{equation*}

        The uniform value of the belief stochastic game, when it exists, is equal to the uniform value of the corresponding blind stochastic game \cite{mertens2015repeated}. The advantage of considering the belief stochastic game is that it can be analyzed using the tools developed for stochastic games with observed states. However, a significant drawback is that the state space becomes infinite. As a result, the existence theorem of Mertens and Neyman \cite{mertens1981stochastic} for the uniform value cannot be directly applied.
    
\section{Ergodic Blind Stochastic Games}\label{WEBMDPs}

    In Section \ref{CD}, we define the class of \textit{ergodic blind stochastic games}. Section \ref{SC} establishes sufficient conditions under which a blind stochastic game is ergodic. Our main contributions to the study of ergodic blind stochastic games are detailed in Section \ref{MR}. Finally, Section \ref{VerErgProp} demonstrates that the problem of determining whether a blind stochastic game is ergodic is decidable.
    
    \subsection{Class Description}\label{CD}
                
        Let $n\geq 1$ and consider an action pair sequence $a^n=(a_1,...,a_n)\in (\mathcal{I}\times \mathcal{J})^n$, where each action pair $a_m=(i_m,j_m)\in\mathcal{I}\times\mathcal{J}$ for all $m\in [1..n]$. We define the forward products of transition matrices by
        \begin{equation*}
            T^{n}(a^n)
                \coloneqq P(a_1)P(a_{2})\cdots P(a_n).
        \end{equation*}
        We say that $P>0$ if $p_{k,k^\prime}>0$ for each $k,k^\prime\in\mathcal{K}$. 
        Similarly, we write $P\geq 0$ if $p_{k,k^\prime}\geq 0$ for each $k,k^\prime\in\mathcal{K}$. 
        When it is clear from the context, we denote $T^{n}(a^n) = T^{n} = (t^n_{k, k'})_{k, k' \in \mathcal{K}}$. 
        Given a matrix $P$, we represent its $k$-th column as $(P)_{k}$ for each $k\in\mathcal{K}$.
        Finally, the transpose of a vector $p$ will be denoted by $p^\top$. A square matrix $P$ is stochastic if $p_{k,k^\prime}\geq 0$ for each $k,k^\prime\in \mathcal{K}$, and the terms of each row sum to one, i.e., $\sum_{k^\prime\in\mathcal{K}}p_{k,k^\prime}=1$ for each $k\in\mathcal{K}$. 
        
        The ergodicity of products of stochastic matrices is defined as follows \cite[Definition 4.4, p. 136]{seneta2006non}.
        
        \begin{Definition}[Ergodicity]\label{PWTCVG}
            A sequence of stochastic matrices $\{ P_i \}_{i\geq 1}$ on $\mathcal{K}\times\mathcal{K}$ is ergodic if we have that, for all $k, \overline{k}, k^\prime \in \mathcal{K}$, 
            \begin{equation*}
                \lim_{n\to\infty} t^{n}_{k,k^\prime}-t^{n}_{\overline{k},k^\prime} = 0. \label{WEC}
            \end{equation*}
        \end{Definition}
    
        \begin{remark}
            In the literature, Definition \ref{PWTCVG} is also commonly known as the weak ergodicity property of forward products of stochastic matrices \cite{seneta2006non}. Moreover, the ``strong" ergodicity of products of stochastic matrices \cite[Definition 4.5, p. 136]{seneta2006non} requires entrywise convergence, that is, each term $t^n_{k,k^\prime}$ must converge to a limit as $n\to\infty$. In contrast, Definition \ref{PWTCVG} emphasizes a convergence based on the differences between rows. Specifically, it indicates that the values of $t_{k,k^\prime}^{n}$ for each $k,k^\prime\in\mathcal{K}$ may not necessarily converge to a limit as $n\to\infty$.
        \end{remark}
        
        Coefficients of ergodicity have been introduced as tools to characterize the convergence speed of forward products of matrices. For a deeper dive into this topic, we refer the reader to the following papers \cite{iosifescu1972two, mott1957xxiv, seneta2006non}. A stochastic matrix $P$ is called \textit{stable} if every row is identical. We define coefficients of ergodicity for stochastic matrices \cite[Definition 4.6, p. 136]{seneta2006non} as follows.
        
        \begin{Definition}[Coefficient of Ergodicity]
            Let $b\in \Delta(\mathcal{K})$ and $\textnormal{\textbf{1}}=(1,\dots,1)\in\mathbb{R}^{|\mathcal{K}|}$. A scalar function $\tau_1(\cdot)$, continuous on the set of $|\mathcal{K}|\times |\mathcal{K}|$ stochastic matrices and satisfying $0\leq\tau_1(P)\leq 1$, is called a coefficient of ergodicity. Moreover, the function $\tau_1$ is proper if 
            \begin{equation*}
                \tau_1(P)=0 \textrm{ if and only if }P=\textnormal{\textbf{1}}b^\top,
            \end{equation*}
            that is, whenever $P$ is stable.
        \end{Definition}
        
        \noindent Given a stochastic matrix $P$, an example of proper coefficient of ergodicity (see \cite{seneta2006non}), denoted by $\tau_1$, is defined by
        \begin{equation*}
            \tau_1(P)\coloneqq\dfrac{1}{2}\max_{k,\overline{k}}\sum_{k^\prime=1}^{|\mathcal{K}|}\left|p_{k,k^\prime}-p_{\overline{k},k^\prime}\right|.
        \end{equation*}
        Using Seneta \cite[Lemma 4.3, p. 139]{seneta2006non}, the coefficient of ergodicity $\tau_1$ is submultiplicative, i.e., for all stochastic matrices $P$ and $Q$, we have that $\tau_1(PQ)\leq\tau_1(P)\tau_1(Q)$. The coefficient of ergodicity $\tau_1$ plays a crucial role in characterizing ergodicity \cite[p. 140]{seneta2006non}.  More specifically, by \cite[Lemma 4.1, p. 136]{seneta2006non}, the ergodicity of forward products of stochastic matrices is equivalent to 
        \begin{equation*}
            \lim_{n\to\infty}\tau_1(T^{n})=0.
        \end{equation*}
        
        We now define the class of \textit{ergodic blind stochastic games}.
        
        \begin{Definition}[Ergodic blind stochastic game]\label{WEUMDP}
            A blind stochastic game $\Gamma$ is ergodic if, for all $\varepsilon>0$, there exists an integer $n_0$ such that, for all action pair sequences $a^n$ with $n\geq n_0$, 
            \begin{equation}
                \tau_1(T^n(a^n))\leq \varepsilon \label{ineqref}.
            \end{equation}
        \end{Definition}
        
        \begin{remark}
            Definition \ref{WEUMDP} considers a ``uniform" $n_0$, i.e., inequality \eqref{ineqref} holds for all action pair sequences $a^{n_0}$. Definition \ref{PWTCVG} characterizes the ergodic property using pointwise convergence. 
            Similarly, we could call a blind stochastic game $\Gamma$ \textit{pointwise ergodic} if, for each action pair sequence, the forward products of transition matrices satisfy the ergodicity condition. 
            By \cite[Theorem 6.1]{daubechies1992sets}, this is equivalent to Definition \ref{WEUMDP}. 
        \end{remark}

    \subsection{Sufficient Conditions}\label{SC}

        We establish that determining the ergodicity of blind stochastic games depends on specific properties of their transition matrices.
        Previous research, see \cite{seneta2006non} for an extensive survey, has identified certain classes of stochastic matrices that ensure ergodicity. 
        To formalize this, let $P$ be a stochastic matrix and let $\mathcal{Q}\subseteq\mathcal{K}$. 
        We define the reachability function 
        \[
            F_P(\mathcal{Q})=\{k^\prime\in\mathcal{K}\, | \,\exists k\in \mathcal{Q}\textrm{ }s.t.\textrm{ }p_{k,k^\prime}>0\},
        \]
        which collects all the states that can be reached from $\mathcal{Q}$ in a single step. 
        We begin by defining these matrix classes and then draw connections between them and subclasses of ergodic blind stochastic games.

        \begin{Definition}[\cite{chevalier2017sets, paz1971introduction, seneta2006non,wolfowitz1963products}]\label{DefClassMatrices}
            \textrm{}
            \begin{itemize}
                \item A matrix $P$ is stochastic indecomposable and aperiodic (SIA), if $\lim_{n\to\infty}P^n=Q$ exists, where $Q$ is a stable stochastic matrix. Denote by $\mathcal{C}_1$ the class of SIA matrices.
                
                \item A stochastic matrix P is a $\mathcal{C}_2$-matrix if $P \in \mathcal{C}_1$ and, for all $Q\in \mathcal{C}_1$, we have that $QP \in \mathcal{C}_1$. Denote by $\mathcal{C}_2$ the class of $\mathcal{C}_2$-matrices.
                    
                \item  A stochastic matrix $P$ is Sarymsakov if, for all two nonempty disjoint subsets $\mathcal{Q},\mathcal{Q}^\prime\subseteq\mathcal{K}$, either there exists a state that can be reached from both $\mathcal{Q}$ and $\mathcal{Q}^\prime$, or the set of reachable states from $\mathcal{Q} \cup \mathcal{Q}^\prime$ has more elements than $\mathcal{Q} \cup \mathcal{Q}^\prime$.
                Formally, a stochastic matrix $P$ is a Sarymsakov matrix if for all two nonempty disjoint subsets $\mathcal{Q},\mathcal{Q}^\prime\subseteq\mathcal{K}$, $F_P(\mathcal{Q})\cap F_P(\mathcal{Q}^\prime)\neq \emptyset$ or $\left|F_P(\mathcal{Q})\cup F_P(\mathcal{Q}^\prime)\right|>\left|\mathcal{Q}\cup \mathcal{Q}^\prime\right|$. 
                Denote by $\mathcal{C}_3$ the class of Sarymsakov matrices.

                \item A stochastic matrix $P$ is scrambling if given any two rows $k$ and $\overline{k}$, there is at least one column $k^\prime$ such that $p_{k,k^\prime}> 0$ and $p_{\overline{k},k^\prime}>0$. Denote by $\mathcal{C}_4$ the class of scrambling matrices.
                    
                \item A stochastic matrix $P$ is Markov if at least one column of $P$ has all entries strictly positive. Denote by $\mathcal{C}_5$ the class of Markov matrices.
            \end{itemize}
        \end{Definition}
        
        A blind stochastic game satisfies the Wolfowitz condition \cite{paz1971introduction} if, for all $n \geq 1$ and any sequence of action pairs $a^n$, the matrix $T^n(a^n)$ belongs to the matrix class $\mathcal{C}_1$. 
        This subclass of blind stochastic games is ergodic. 
        Specifically, there exists an integer $n_0$ such that, for all action pair sequences $a^{n_0}$, the inequality $\tau_1\left(T^{n_0}(a^{n_0})\right)<1$ holds \cite{paz1965definite}. 
        Using Paz \cite[Theorem 3.1, p. 80]{paz1971introduction}, the ergodicity of blind stochastic games satisfying the Wolfowitz condition follows. 
        Moreover, by Seneta \cite{seneta2006non}, the matrix classes satisfy that $\mathcal{C}_5 \subsetneq \mathcal{C}_4 \subsetneq \mathcal{C}_3 \subsetneq \mathcal{C}_2 \subsetneq \mathcal{C}_1$.
        Therefore, by definition of $\mathcal{C}_2$, it follows that every blind stochastic game with transition matrices belonging to the classes $\mathcal{C}_5,\mathcal{C}_4,\mathcal{C}_3$ or $\mathcal{C}_2$ is also ergodic.

    \subsection{Statement of the Results}\label{MR}
    
        We now present our main results, with the proofs postponed to Section \ref{proofsresults}. 
        
        \begin{Theorem}\label{BMDPdec}
            All ergodic blind stochastic games have a uniform value. Moreover, the decision version of approximating the uniform value for the class of ergodic blind stochastic games is decidable.
        \end{Theorem}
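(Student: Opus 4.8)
The plan is to exploit the defining feature of \emph{blind} games: since neither player observes states or rewards, every strategy prescribes actions as a function of the action history alone, so the law of the action process under any pair $(\sigma,\pi)$ is independent of the initial belief $b_1$. Conditioning on an action history $h_m$ then performs no genuine Bayesian updating, and the belief is simply the open-loop push-forward $b_m = b_1^\top T^{m-1}(h_m)$. First I would record the contraction estimate supplied by the coefficient of ergodicity: for two initial beliefs $b_1,b_1'$ and a common action history, $\|(b_1-b_1')^\top T^{m-1}\|_1 \le \tau_1(T^{m-1})\|b_1-b_1'\|_1 \le 2\tau_1(T^{m-1})$. Since $g\in[0,1]$, the per-stage reward gap is controlled by the $\ell^1$ gap of the beliefs, so for any fixed $(\sigma,\pi)$ the $N$-stage payoffs satisfy $|\gamma_N^{b_1}(\sigma,\pi)-\gamma_N^{b_1'}(\sigma,\pi)| \le \eta_N$, where $\eta_N := \tfrac{2}{N}\sum_{m=0}^{N-1}\rho_m$ and $\rho_m := \max_{a^m}\tau_1(T^m(a^m))$. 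Ergodicity (Definition \ref{WEUMDP}) together with submultiplicativity of $\tau_1$ gives $\rho_m\downarrow 0$, hence $\eta_N\to 0$ by Cesàro convergence. Taking $\sup_\sigma\inf_\pi$ yields $|v_N(b_1)-v_N(b_1')|\le\eta_N$, i.e. the $N$-stage values are asymptotically belief-independent at the explicit rate $\eta_N$.

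Next I would upgrade this into convergence of $(v_N)$ with the same computable rate. Writing $W_N(b):=N\,v_N(b)$ and splitting an $(N+M)$-stage game into a first block of length $N$ followed by a second block of length $M$, each player can play an optimal strategy of the isolated block game for the belief actually reached at the block's start, a quantity each player computes from the action history. Because the value of the second block differs from $v_M(b_1)$ by at most $\eta_M$, this yields the two-sided almost-additivity $|W_{N+M}(b_1)-W_N(b_1)-W_M(b_1)|\le M\eta_M$. A Fekete-type argument with this error term shows that $\limsup_N v_N(b_1)\le v_M(b_1)+\eta_M$ and $\liminf_N v_N(b_1)\ge v_M(b_1)-\eta_M$ for every $M$; letting $M\to\infty$ proves that $v:=\lim_N v_N(b_1)$ exists and, crucially, that $|v_N(b_1)-v|\le\eta_N$ for all $N$. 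Combined with the previous paragraph this also shows $v$ does not depend on $b_1$, which is exactly Theorem \ref{0opt}.

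Then I would establish the uniform value by concatenating block-optimal strategies. Fix $N$ with $2\eta_N\le\varepsilon/2$, so that $v_N(b)\ge v-\varepsilon/2$ for every belief $b$. Player 1 partitions the horizon into consecutive blocks of length $N$ and, at the start of each block, plays the $N$-stage optimal strategy for the current belief; this guarantees a block-average at least $v-\varepsilon/2$ against any behavior of Player 2, irrespective of the block's starting belief. Since rewards are nonnegative, for every horizon $N'=qN+r$ the overall average is at least $\tfrac{q}{q+1}(v-\varepsilon/2)$, which exceeds $v-\varepsilon$ once $N'$ is large; the symmetric construction bounds Player 2 from above, establishing the uniform value. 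For decidability of approximation I would make every ingredient effective: assuming rational data, each $\rho_m$ is a maximum of $\tau_1$ over the finitely many sequences $a^m$ and is computed exactly, so $\eta_N$ is computable and, by ergodicity, eventually drops below any target; and $v_N(b_1)$ is computed exactly by backward induction, solving at each node of the depth-$N$ belief tree the matrix game with entries $\overline{g}(b,i,j)+W_{N-1}(\psi(b,i,j))$. Given $x$ and $\varepsilon$, the algorithm selects $N$ with $\eta_N\le\varepsilon/2$, computes $v_N(b_1)$, and accepts iff $v_N(b_1)\ge x$; the bound $|v_N(b_1)-v|\le\eta_N$ makes this answer correct on the promise region.

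The main obstacle is the second step: turning the qualitative ergodicity of products of stochastic matrices into a single \emph{computable} modulus $\eta_N$ governing both the belief-independence and the convergence $v_N\to v$. This is precisely what separates the ergodic case from the general one, where Madani et al.\ \cite{madani2003undecidability} show that no such rate can exist; the delicate points are verifying that the block-boundary beliefs are controlled uniformly over all of Player 2's responses and that the error in the almost-additivity relation is genuinely $o(N)$ rather than merely bounded.
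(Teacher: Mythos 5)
Your proposal is correct, but it takes a genuinely different route from the paper. The paper proceeds by aggregation: using Proposition \ref{upperbound}, it replaces each length-$n_\varepsilon$ product $T^n(a^n)$ by its row-averaged stable version, builds from these a finite-state \emph{abstract stochastic game}, proves that beliefs and payoffs of the original and abstract games stay within $4\varepsilon$ of each other (Lemmas \ref{beliefPOMDPs} and \ref{cvgmPOMDPs}), and then imports existence of the uniform value from Mertens--Neyman \cite{mertens1981stochastic} applied to the finite abstract game (via an accumulation-point argument) and decidability from known algorithms for finite stochastic games \cite{oliu2021new}, which is also what yields the 2-EXPSPACE bound via \cite{chatterjee2008stochastic}. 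You build no auxiliary game: your engine is the contraction property $\lVert (b-b')^\top T^m \rVert_1 \le \tau_1(T^m)\lVert b-b'\rVert_1$ of the Dobrushin coefficient---a standard fact, but one the paper never states, so you should cite it explicitly from \cite{seneta2006non}---combined with the observation that in a blind game the action process is belief-independent and conditioning on actions is pure push-forward. This gives the computable modulus $\eta_N$ making $v_N$ nearly constant over \emph{all} beliefs, and from there your almost-additivity $|W_{N+M}-W_N-W_M|\le M\eta_M$ with sublinear error, the Fekete-type bound $|v_N-v|\le \eta_N$, the block-concatenation of $N$-stage optimal strategies (which is sound precisely because of the uniformity over beliefs; this is what fails in general stochastic games like the Big Match), and the promise-problem correctness of ``accept iff $v_N(b_1)\ge x$'' all check out. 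Your route buys: a self-contained argument (no Mertens--Neyman, no algorithms for solving stochastic games; only minimax for finite matrix games and exact rational backward induction on the belief tree), an explicit convergence rate $|v_N - v| \le \eta_N$, and Theorem \ref{0opt} as an immediate corollary rather than a separate proof. The paper's route buys: a structural reduction to a standard finite object, which makes the complexity accounting transparent and isolates exactly what breaks for hidden stochastic games (stochastic belief transitions destroy the deterministic coupling), as discussed in Section \ref{ExtPOMDPs}. Both proofs ultimately rest on the same mechanism---ergodicity forces beliefs to forget the past at a uniform, computable rate---but they package it in substantially different ways.
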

        
        \noindent Our approach reduces an ergodic blind stochastic game to a (standard) stochastic game with a double-exponential number of states. 
        Because such games can be solved in \textnormal{PSPACE} via the theory of reals-closed fields \cite{chatterjee2008stochastic}, we obtain a 2-\textnormal{EXPSPACE} upper bound as computational complexity.\\

        Define a blind MDP as Markov if $P(i)\in \mathcal{C}_5$ for every $i\in\mathcal{I}$. 

        \begin{Theorem}\label{EVU}
            The decision version of computing the uniform value for the class of Markov blind MDPs is undecidable. In particular, computing the uniform value for ergodic blind stochastic games is undecidable.
        \end{Theorem}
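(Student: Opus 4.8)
The plan is to reduce an undecidable problem about probabilistic finite automata to the decision version of computing the uniform value, while forcing every transition matrix into the Markov class $\mathcal{C}_5$. The ``in particular'' clause is then immediate: a Markov blind MDP is a single-player blind stochastic game whose transition matrices lie in $\mathcal{C}_5$, hence by Section~\ref{SC} (where $\mathcal{C}_5$-matrices are shown to be ergodic) it is an ergodic blind stochastic game, so undecidability for the former transfers to the latter. For the source problem I would take the strict emptiness problem for probabilistic automata: given a PFA $A$ with an absorbing accepting state $q_{\mathrm{acc}}$ and a rational $\lambda$, decide whether $\sup_w P_A(w) > \lambda$, where $P_A(w)$ is the probability of being in $q_{\mathrm{acc}}$ after reading the word $w$ from the initial state. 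This problem is undecidable \cite{paz1971introduction,madani2003undecidability}, and, because $q_{\mathrm{acc}}$ is absorbing, $P_A(w)$ is nondecreasing along extensions of $w$.

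From $A$ I would build a blind MDP $\Gamma_A$ whose states are those of $A$, whose single player's actions are the input letters, and whose transition matrix for letter $i$ is the perturbation $P(i) = (1-\eta)M_i + \eta\,\textnormal{\textbf{1}}u^\top$, where $M_i$ is the stochastic matrix of $A$ for letter $i$, $u$ is a fixed fully supported distribution, and $\eta\in(0,1)$ is a small rational. Every column of $\eta\,\textnormal{\textbf{1}}u^\top$ is strictly positive, so $P(i)\in\mathcal{C}_5$ for each $i$; hence $\Gamma_A$ is Markov and, by Section~\ref{SC}, ergodic. Consequently Theorem~\ref{BMDPdec} guarantees that $\Gamma_A$ has a uniform value $v$, and Theorem~\ref{0opt} makes it independent of the initial belief, so the decision version of computing $v$ is well posed. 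The rewards I would design so that an initial action lets the player commit either to a ``safe'' baseline gadget whose long-run average is exactly a target rational $x$, or to a ``gambling'' gadget that pays only in $q_{\mathrm{acc}}$; both sub-gadgets are themselves perturbed into $\mathcal{C}_5$, and the uniform value becomes $v = \max\{x, D\}$, where $D$ is the optimal long-run average of the gambling gadget alone, an exact and monotone function of the acceptance behaviour of $A$. Making the baseline robust to the reset perturbation is a routine but necessary part of the construction.

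The heart of the argument---and the step I expect to be hardest---is the exact threshold correspondence
\[
    v > x \iff \sup_w P_A(w) > \lambda
\]
for a rational $x$ computed from $\eta$ and $\lambda$. The delicate point is that the reset perturbation turns $D$ into a geometric (discounted) average of the prefix-acceptance probabilities $P_A(w)$ rather than their plain supremum, so one must choose the reward scaling and $\eta$ so that this averaged optimum crosses its threshold exactly when $\sup_w P_A(w)$ crosses $\lambda$, exploiting the monotonicity granted by the absorbing state $q_{\mathrm{acc}}$. Crucially, there can be no computable gap between the two cases: since $\sup_w P_A(w)$ may approach $\lambda$ from below without ever exceeding it, $v$ approaches $x$ from below in the ``no'' instances and attains $v=x$ at the supremum. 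This is precisely why the result is consistent with, and complementary to, Theorem~\ref{BMDPdec}: an approximation algorithm is permitted to answer arbitrarily on these boundary instances, whereas an exact algorithm would have to resolve the undecidable dichotomy $v>x$ versus $v\le x$. Establishing the equivalence thus reduces to a careful analysis of the renewal/discounted structure of the ergodic belief dynamics at the boundary, after which undecidability of computing $v$ follows from that of the emptiness problem for $A$.
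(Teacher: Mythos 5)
Your high-level skeleton matches the paper's (reduce an undecidable strict-threshold problem for PFAs to exact computation of the uniform value, perturb the transitions so every matrix is Markov, and invoke Theorems \ref{BMDPdec} and \ref{0opt} to make the decision problem well posed), but the step you defer as ``hardest'' --- the exact equivalence $v > x \iff \sup_w P_A(w) > \lambda$ --- is the entire content of the theorem, and your specific construction makes it fail rather than merely difficult. Because your perturbation $(1-\eta)M_i + \eta\,\textnormal{\textbf{1}}u^\top$ re-injects the leaked mass into \emph{all} states through the fully supported $u$, and because payoff accrues through an $\eta$-damped (discounted-type) average, the optimal long-run average $D$ of your gambling gadget is not determined by $\sup_w P_A(w)$: it depends on the whole profile of prefix-acceptance probabilities, i.e., on how fast the supremum is approached. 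Concretely, an automaton whose supremum is slightly below $\lambda$ but attained by short words yields a \emph{larger} $D$ than an automaton whose supremum is slightly above $\lambda$ but approached only along astronomically long words, so no threshold $x$ computed from $\eta$ and $\lambda$ alone can separate the two cases, and $v=\max\{x,D\}$ inherits the same obstruction. The monotonicity granted by the absorbing accepting state does not repair this, precisely because of the damping you yourself identify.

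The paper's construction resolves exactly this difficulty with two design choices absent from your proposal. First, leaked mass is sent to a single absorbing state $\hat{k}$ whose reward is the threshold value $1/2$ itself, and every non-\texttt{Restart} action also has reward exactly $1/2$ in all states; second, payoff is extracted only through a player-controlled \texttt{Restart} action with reward equal to the acceptance indicator ($1$ on $\mathcal{B}$, $0$ off $\mathcal{B}$, $1/2$ on $\hat{k}$) that deterministically returns the state to $k_1$. Then the payoff of any cycle $(w,\texttt{Restart})$ is a convex combination of terms equal to $1/2$ and the \emph{unperturbed} acceptance probability $\mathbb{P}_w^{\delta_{k_1}}(K_{|w|+1}\in\mathcal{B})$, so it exceeds $1/2$ if and only if that probability does, no matter how long $w$ is or how small the leak parameter: exactness comes for free, with no tuning of scalings. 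Your proposal also leaves the converse direction unaddressed: to pass from ``value $>$ threshold'' back to a single witnessing word, one must convert an arbitrary history-dependent, randomized strategy into a finite cycle; the paper does this by invoking the eventually-periodic-strategy theorem of \cite{chatterjee2022finite}, arguing \texttt{Restart} is played infinitely often, and splitting the cycle at its \texttt{Restart} occurrences. Without the neutral-reward trick and this periodicity argument, your reduction does not go through.
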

    
        Theorem \ref{EVU} highlights that the decidability of the approximation problem is a ``tight'' result, as it cannot be extended to the exact problem. Moreover, it establishes a separation between standard stochastic games, where the exact problem is decidable, and blind stochastic games, where it is not.
            
        \begin{Theorem}\label{0opt}
            For every ergodic blind stochastic game, the uniform value is independent of the initial belief.
        \end{Theorem}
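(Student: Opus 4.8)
The plan is to compare, for any two initial beliefs $b_1,b_1'\in\Delta(\mathcal{K})$, the finite-horizon values $v_N(b_1)$ and $v_N(b_1')$, and to show their difference vanishes as $N\to\infty$. Since ergodic games admit a uniform value by Theorem \ref{BMDPdec} and the uniform value equals $\lim_N v_N$, establishing $\lim_N |v_N(b_1)-v_N(b_1')| = 0$ immediately yields $v(b_1)=v(b_1')$.

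The conceptual starting point is that players are blind: a strategy depends only on the action history, so the action choices are generated independently of the state realization. Hence, for any fixed pair $(\sigma,\pi)$, the law of the action history $(I_1,J_1,\dots,I_N,J_N)$ is \emph{identical} under $\mathbb{P}^{b_1}_{\sigma,\pi}$ and $\mathbb{P}^{b_1'}_{\sigma,\pi}$. I would first record that, conditioning on a history $h_m$, the belief is exactly $b_m^{b_1}=b_1\,T^{m-1}(h_m)$ — a genuine probability vector, since the $P(i,j)$ are stochastic, so no renormalization is needed — and that $\mathbb{E}[G_m\mid H_m,I_m,J_m]=\overline{g}(b_m,I_m,J_m)$. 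This lets me write $\gamma_N^{b_1}(\sigma,\pi)=\tfrac1N\sum_{m=1}^N\mathbb{E}\big[\overline{g}(b_m^{b_1},I_m,J_m)\big]$, the expectation being taken over the belief-independent law of action histories, so that $\gamma_N^{b_1}$ and $\gamma_N^{b_1'}$ live on the same probability space and differ only through the beliefs.

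Next I would bound the per-stage discrepancy. Since $g$ takes values in $[0,1]$, we have $|\overline{g}(b,i,j)-\overline{g}(b',i,j)|\le\|b-b'\|_1$, and because $d:=b_1-b_1'$ satisfies $d\,\mathbf{1}=0$, the Dobrushin-type inequality $\|d\,T\|_1\le\tau_1(T)\,\|d\|_1$ (the property underlying the submultiplicativity of $\tau_1$) gives $\|b_m^{b_1}-b_m^{b_1'}\|_1\le 2\,\tau_1(T^{m-1}(h_m))$ for \emph{every} history $h_m$. Invoking ergodicity via Definition \ref{WEUMDP}: given $\varepsilon>0$ there is $n_0$ with $\tau_1(T^{m-1}(h_m))\le\varepsilon$ whenever $m-1\ge n_0$, uniformly over all $h_m$. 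Splitting the Cesàro sum at $n_0$ and bounding each of the first $n_0$ terms trivially by $2$ yields, uniformly over \emph{all} $(\sigma,\pi)$,
\[
    \big|\gamma_N^{b_1}(\sigma,\pi)-\gamma_N^{b_1'}(\sigma,\pi)\big|\le\frac{2n_0}{N}+2\varepsilon.
\]

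Finally I would promote this uniform-over-strategies estimate to the values. Because $v_N$ is a min--max of the maps $\gamma_N$ and these differ by at most $c_N:=2n_0/N+2\varepsilon$ regardless of $(\sigma,\pi)$, a standard argument gives $|v_N(b_1)-v_N(b_1')|\le c_N$. Letting $N\to\infty$ yields $|v(b_1)-v(b_1')|\le 2\varepsilon$, and then $\varepsilon\to 0$ concludes. I expect the main obstacle to be twofold: first, pinning down the matrix inequality $\|d\,T\|_1\le\tau_1(T)\|d\|_1$ for zero-sum $d$; and, above all, making every estimate uniform in $(\sigma,\pi)$ and in the history $h_m$. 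It is precisely this uniformity — supplied by the uniform $n_0$ in the definition of ergodicity — that allows the comparison to survive the passage from fixed strategies to the value, and then to the limit.
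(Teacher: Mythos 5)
Your proof is correct, but it follows a genuinely different route from the paper's. The paper recycles the machinery of Theorem \ref{BMDPdec}: it builds the abstract stochastic games $\mathcal{G}^\ast(b_1,\varepsilon)$ and $\mathcal{G}^\ast(b_1^\prime,\varepsilon)$, notes that the set of abstract beliefs other than the initial one does not depend on the initial belief, so that any fixed strategy pair induces the same state process in both abstract games after the first $n_\varepsilon$ stages; this yields $|v_N^\ast(b_1,\varepsilon)-v_N^\ast(b_1^\prime,\varepsilon)|\leq n_\varepsilon/N$, hence $v^\ast(b_1,\varepsilon)=v^\ast(b_1^\prime,\varepsilon)$, and it concludes via the approximation bound $|v(b_1)-v^\ast(b_1,\varepsilon)|\leq 4\varepsilon$, giving $|v(b_1)-v(b_1^\prime)|\leq 8\varepsilon$. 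You instead compare the two games directly, with no abstract game at all: you couple them through the belief-independent law of action histories (the same fact the paper records in the proof of Lemma \ref{cvgmPOMDPs}), and you control the per-stage payoff gap via the variational characterization of the Dobrushin coefficient, $\lVert d^\top T\rVert_1\leq\tau_1(T)\lVert d\rVert_1$ for row vectors $d$ whose entries sum to zero --- a standard fact (it is exactly what makes $\tau_1$ submultiplicative), so invoking it is legitimate. Both arguments use Theorem \ref{BMDPdec} only to know that $v_N\to v$. Your approach buys self-containedness and an explicit uniform rate, $|v_N(b_1)-v_N(b_1^\prime)|\leq 2n_0/N+2\varepsilon$ over all strategy pairs; the paper's approach buys economy of means, since its comparison lemmas are already in place. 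The one step you should spell out rather than assert is the identity $\mathbb{E}[G_m\mid H_m,I_m,J_m]=\overline{g}(b_m,I_m,J_m)$: it rests on $(I_m,J_m)$ being conditionally independent of $K_m$ given $H_m$, which holds precisely because strategies are blind (they depend only on past actions plus private randomization), and is the hinge on which your whole coupling turns.
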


    \subsection{Verifying the Ergodic Property}\label{VerErgProp}
        
        We consider the problem of deciding whether the ergodic property holds for a given blind stochastic game. 
        We show that this problem is decidable and that it can be done within exponential space. 
        From Paz \cite[Corollary 4.6 and Theorem 4.7, p. 90]{paz1971introduction}, we have the following proposition.
    
        \begin{Proposition}
        \label{UB}
            A blind stochastic game $\Gamma$ is ergodic if and only if there exists an integer $n_0 \leq \tfrac{3^{|\mathcal{K}|}-2^{|\mathcal{K}|+1}+1}{2}$ such that, for every action pair sequence $a^n$ with $n\geq n_0$,
            \begin{equation}
                \tau_1(T^n(a^n)) < 1 . \label{check}
            \end{equation}
        \end{Proposition}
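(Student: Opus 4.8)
The plan is to translate ergodicity of the blind stochastic game into a statement about the finite set of stochastic matrices $\mathcal{M}=\{P(a):a\in\mathcal{I}\times\mathcal{J}\}$ and then invoke Paz's combinatorial bound. The starting observation is the elementary identity, valid for any stochastic matrix $P$ since its rows sum to one, that $\tau_1(P)=1-\min_{k,\overline{k}}\sum_{k'}\min(p_{k,k'},p_{\overline{k},k'})$. Hence $\tau_1(P)<1$ holds exactly when every pair of rows shares a column in which both entries are strictly positive, i.e. when $P$ is scrambling (the class $\mathcal{C}_4$). Condition \eqref{check} is therefore the requirement that every sufficiently long forward product $T^n(a^n)$ be scrambling, and throughout I write $B:=\frac{3^{|\mathcal{K}|}-2^{|\mathcal{K}|+1}+1}{2}$ for the claimed threshold.

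For the direction ($\Leftarrow$), suppose some $n_0\leq B$ satisfies \eqref{check} for all $n\geq n_0$. Since $\mathcal{I}\times\mathcal{J}$ is finite there are finitely many products of length $n_0$, so $\delta:=\max_{a^{n_0}}\tau_1(T^{n_0}(a^{n_0}))<1$. Splitting an arbitrary sequence of length $n=qn_0+r$ into $q$ blocks of length $n_0$ plus a remainder and applying submultiplicativity of $\tau_1$ (Seneta \cite{seneta2006non}) gives $\tau_1(T^n(a^n))\leq\delta^{q}$, which tends to $0$ as $n\to\infty$ uniformly in $a^n$. By the characterization of ergodicity through $\tau_1$ (Seneta \cite{seneta2006non}) this is precisely the ergodicity of $\Gamma$ in the sense of Definition \ref{WEUMDP}.

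For the direction ($\Rightarrow$) together with the explicit bound, I would run Paz's analysis of the reachability dynamics on pairs of subsets. Record the effect of a matrix on supports via the map $F_{P(a)}$ and track, for each unordered pair $\{\mathcal{Q},\mathcal{Q}'\}$ of disjoint nonempty subsets of $\mathcal{K}$, its image $\{F_{P(a)}(\mathcal{Q}),F_{P(a)}(\mathcal{Q}')\}$, declaring the pair \emph{dead} once the two images intersect. A product $T^n(a^n)$ fails to be scrambling exactly when some singleton pair $\{\{k\},\{\overline{k}\}\}$ survives (stays disjoint) for all $n$ steps along $a^n$. The number of such pairs is exactly $B$: each element of $\mathcal{K}$ lies in $\mathcal{Q}$, in $\mathcal{Q}'$, or in neither, giving $3^{|\mathcal{K}|}$ ordered configurations, of which $2^{|\mathcal{K}|+1}-1$ have an empty part; subtracting these and halving yields $B$. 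Consequently, if a surviving trajectory reaches length $B$ it visits $B+1$ disjoint nonempty pairs and, by pigeonhole, revisits one, producing a cycle of actions that keeps the pair alive and can be repeated to build surviving trajectories of every length. The corresponding products then violate \eqref{check} for arbitrarily large $n$, so $\Gamma$ is not ergodic. Contrapositively, if $\Gamma$ is ergodic no such cycle exists, every surviving trajectory dies within $B$ steps, and hence every product of length $B$ is scrambling; submultiplicativity then upgrades this to $\tau_1(T^n(a^n))<1$ for all $n\geq B$, giving \eqref{check} with $n_0=B\leq B$. This is exactly the content we import from Paz \cite{paz1971introduction} (Corollary 4.6 and Theorem 4.7).

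The main obstacle is the sharp value of the threshold $B$: both the correct combinatorial count of disjoint pairs and the pigeonhole-plus-cycle argument certifying that no longer witness is ever needed are the delicate ingredients, and I would lean on Paz's Theorem 4.7 rather than reprove it in full. The remaining pieces — the characterization $\tau_1(P)<1\iff P$ scrambling, finiteness of $\mathcal{M}$, and submultiplicativity of $\tau_1$ — are routine and already available in the excerpt.
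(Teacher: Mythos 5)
Your proposal is correct and follows essentially the same route as the paper: the paper establishes Proposition \ref{UB} simply by invoking Paz \cite[Corollary 4.6 and Theorem 4.7, p. 90]{paz1971introduction}, which is exactly the result you lean on for the combinatorial threshold. The scaffolding you add --- the identity showing $\tau_1(P)<1$ if and only if $P$ is scrambling, the submultiplicativity argument for the ($\Leftarrow$) direction, and the pigeonhole count of disjoint nonempty pairs giving $\tfrac{3^{|\mathcal{K}|}-2^{|\mathcal{K}|+1}+1}{2}$ --- is a correct unpacking of what that citation contains.
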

        
        \noindent Proposition \ref{UB} will form the basis for the proof of Theorem \ref{BMDPdec} in Section \ref{Dec}. Building on Proposition \ref{UB}, we now consider the following result.

        \begin{Proposition}\label{veryergo}
            Let $\Gamma$ be a blind stochastic game. Verifying whether the ergodic property holds for $\Gamma$ is decidable using exponential space.
        \end{Proposition}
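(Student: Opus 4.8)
The plan is to reduce the verification to a search over the finitely many \emph{support patterns} of products of transition matrices, and then to invoke Proposition~\ref{UB} to bound that search. The starting observation is that the condition $\tau_1(P)<1$ depends only on the zero/nonzero pattern of $P$: since $\tau_1(P)=1-\min_{k,\overline{k}}\sum_{k'}\min(p_{k,k'},p_{\overline{k},k'})$, we have $\tau_1(P)<1$ if and only if every pair of rows $k,\overline{k}$ shares a column $k'$ with $p_{k,k'}>0$ and $p_{\overline{k},k'}>0$, i.e. if and only if $P$ is scrambling (belongs to $\mathcal{C}_4$). Consequently, whether $\tau_1(T^n(a^n))<1$ is a Boolean property of the support of $T^n(a^n)$, and the support of a product is obtained from the supports of its factors by Boolean matrix multiplication.

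Next I would record two facts that let us avoid enumerating the (doubly exponentially many) action sequences. First, scrambling is preserved under right multiplication by a stochastic matrix: by submultiplicativity of $\tau_1$, if $\tau_1(P)<1$ and $Q$ is stochastic then $\tau_1(PQ)\le\tau_1(P)\tau_1(Q)\le\tau_1(P)<1$ (equivalently, at the Boolean level a common positive column of two rows of $P$ survives multiplication by any row-nonempty $Q$, and supports of stochastic matrices are row-nonempty). Writing $T^n(a^n)=T^{n_0}(a_1,\dots,a_{n_0})\cdot T^{n-n_0}(a_{n_0+1},\dots,a_n)$ for $n\ge n_0$, this shows that if every length-$n_0$ product is scrambling then every product of length at least $n_0$ is scrambling. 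Hence the condition of Proposition~\ref{UB} is equivalent to the existence of an integer $n_0\le B:=(3^{|\mathcal{K}|}-2^{|\mathcal{K}|+1}+1)/2$ such that \emph{every} product $T^{n_0}(a^{n_0})$ of length exactly $n_0$ is scrambling. Second, the number of distinct $|\mathcal{K}|\times|\mathcal{K}|$ Boolean matrices is at most $2^{|\mathcal{K}|^2}$, so the set $\mathcal{S}^{(n)}$ of support patterns realized by length-$n$ products has exponential size, regardless of how large $n$ is.

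The algorithm would then proceed as follows. Initialize $\mathcal{S}^{(1)}$ as the set of support patterns of the matrices $P(i,j)$, $(i,j)\in\mathcal{I}\times\mathcal{J}$. For $n=1,2,\dots,B$, test whether every matrix in $\mathcal{S}^{(n)}$ is scrambling; if so, declare $\Gamma$ ergodic and halt. Otherwise compute $\mathcal{S}^{(n+1)}=\{M\cdot S:M\in\mathcal{S}^{(n)},\ S\in\mathcal{S}^{(1)}\}$ by Boolean matrix multiplication and continue. If no $n\le B$ passes the test, declare $\Gamma$ not ergodic. Correctness would follow immediately from the reformulation of Proposition~\ref{UB} above, and the preservation fact guarantees that the ``all-scrambling'' predicate, once true, stays true, so the first success is also the final answer. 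For the space bound, each $\mathcal{S}^{(n)}$ consists of at most $2^{|\mathcal{K}|^2}$ Boolean matrices of $|\mathcal{K}|^2$ bits each, so storing the current set together with one buffer for the next takes exponential space; the loop counter ranges up to $B\le 3^{|\mathcal{K}|}$ and thus needs only $O(|\mathcal{K}|)$ bits; and each Boolean product is computable in space polynomial in $|\mathcal{K}|$.

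The main obstacle I anticipate is the space accounting rather than the logic: a naive reading of Proposition~\ref{UB} invites enumerating all action sequences up to length $B$, which is doubly exponential and would not fit in exponential space. The step that defeats this is collapsing the exponentially many sequences into the at-most-$2^{|\mathcal{K}|^2}$ reachable support patterns, which relies on the two observations above, namely that $\tau_1<1$ is a support-level (scrambling) property and that scrambling is inherited under further multiplication, so that it suffices to test products of a single length $n_0\le B$. Verifying these two points carefully, together with the fact that the running set of patterns never exceeds exponential size, is what would yield the claimed exponential-space bound.
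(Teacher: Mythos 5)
Your proof is correct, but it takes a genuinely different (and in one respect stronger) route than the paper. The paper's proof is a direct enumeration: it invokes Proposition~\ref{UB}, notes (as you do) that it suffices to check sequences of length exactly $n_0$, and then simply enumerates all action-pair sequences of each length $n_0 \le (3^{|\mathcal{K}|}-2^{|\mathcal{K}|+1}+1)/2$, computing $\tau_1(T^{n_0}(a^{n_0}))$ numerically for each one; exponential space suffices because the enumeration reuses space (one sequence and one rational matrix product at a time, each of exponential bit size), even though the number of sequences, and hence the running time, is doubly exponential in $|\mathcal{K}|$. You instead observe that $\tau_1(P)<1$ is equivalent to $P$ being scrambling, hence a property of the support alone, and that supports of products are computed by Boolean matrix multiplication; this lets you collapse the doubly exponentially many sequences into at most $2^{|\mathcal{K}|^2}$ reachable support patterns, maintained iteratively. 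Both proofs rest on the same two pillars---Proposition~\ref{UB} and the preservation of $\tau_1<1$ under right multiplication by a stochastic matrix (submultiplicativity)---and both achieve the stated exponential space bound, so your argument fully establishes the proposition. What your version buys in addition is singly exponential \emph{time} rather than the doubly exponential time of the paper's enumeration, and it avoids arithmetic on rational matrix entries whose bit size grows with the product length; what the paper's version buys is brevity, since it needs no support-pattern machinery and no scrambling characterization of $\tau_1<1$.
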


        \begin{proof}[Proof of Proposition \ref{veryergo}]
            Although Proposition \ref{UB} states a condition for all sequences of action pairs $a^n$ with $n \ge n_0$, it is sufficient that the condition holds for sequences of action pairs of length $n_0$ only. Therefore, Proposition \ref{UB} immediately implies an algorithm to decide whether a blind stochastic game satisfies the ergodic property. 
            Indeed, it is sufficient to verify if there is $n_0 \leq \left( 3^{|\mathcal{K}|}-2^{|\mathcal{K}|+1}+1 \right) / 2$ such that, for all sequences of action pairs of length $n_0$, we have that $\tau_1(T^{n_0}(a^{n_0})) < 1$.
            Proceeding by enumeration, we can check whether a blind stochastic game satisfies the ergodic property in exponential space because we verify through enumeration whether $\tau_1(T^n(a^n))<1$ is satisfied for every sequence of action pairs of size $n\leq (3^{|\mathcal{K}|}-2^{|\mathcal{K}|+1}+1)/2$.
        \end{proof}

        We also present a simple example, namely, \Cref{exampleBMDP}, which illustrates an application to a machine maintenance problem as an ergodic blind MDP.

        \begin{example}\label{exampleBMDP}
            A player monitors an inaccessible machine, which can be in one of the three following states: $\mathcal{K}=\{\textrm{Good Condition}, \textrm{Fair Condition},\textrm{Poor Condition}\}$. 
            They can take one of the following actions: $\mathcal{I}=\{\textrm{Wait}, \textrm{Basic Maintenance}, \textrm{Critical Repair}\}$. 
            The transitions and rewards are defined in Table \ref{TransRew}. 
            Each transition matrix is Markov because, for all actions $i\in\mathcal{I}$, we have $\tau_1(P(i))<1$. Therefore, the ergodic property holds. 
            \begin{table}[ht!]
            \centering
            \caption{Transition and Reward Matrices for each Action: \textbf{G}, \textbf{F}, and \textbf{P} stand for \textit{Good}, \textit{Fair}, and \textit{Poor} condition, respectively.}
            \label{TransRew}
            \begin{adjustbox}{max width=\textwidth}
                \begin{tabular}{cccccccccccccc}
                    \toprule
                     & \multicolumn{4}{c}{\textbf{Wait}} & \multicolumn{4}{c}{\textbf{Basic Maintenance}} & \multicolumn{4}{c}{\textbf{Critical Repair}} \\
                    \cmidrule(lr){2-5} \cmidrule(lr){6-9} \cmidrule(lr){10-13}
                    & \textbf{G} & \textbf{F} & \textbf{P} & \textbf{Reward} & \textbf{G} & \textbf{F} & \textbf{P} & \textbf{Reward} & \textbf{G} & \textbf{F} & \textbf{P} & \textbf{Reward} \\
                    \midrule
                    \textbf{G} & \centering 0.9 & 0.1 & 0.0 & 0.9 & 0.95 & 0.05 & 0.0 & 0.1 & 1.0 & 0.0 & 0.0 & 0.1 \\
                    \textbf{F} & 0.0 & 0.7 & 0.3 & 0.55 & 0.8 & 0.2 & 0.0 & 0.7 & 0.9 & 0.1 & 0.0 & 0.5 \\
                    \textbf{P} & 0.0 & 0.1 & 0.9 & 0.05 & 0.0 & 0.3 & 0.7 & 0.4 & 0.3 & 0.65 & 0.05 & 0.85 \\
                    \bottomrule
                \end{tabular}
            \end{adjustbox}
            \end{table}
        \end{example}
            
\section{Proof of Results}\label{proofsresults}

    In Section \ref{Dec}, we consider the existence of the uniform value and the decidability of the approximation problem. 
    Next, we show the undecidability of the exact problem in Section \ref{Undec}. Finally, we prove that the uniform value is independent of the initial belief \ref{Addprop}. 

    \subsection{Proof of Theorem \ref{BMDPdec}}\label{Dec}
    
        Our approach exploits ergodicity to construct a finite-state stochastic game, referred to as the \textit{abstract stochastic game}, where the $N$-stage payoff deviates from that of the ergodic blind stochastic game by at most $\varepsilon$. This approach can be viewed as an aggregation scheme, where similar beliefs in the original game are grouped and represented by an ``abstract" belief in the finite-state stochastic game. A similar interpretation for MDPs is discussed in \cite{givan2000bounded}.\\
        
        We prove Theorem \ref{BMDPdec} in four steps. Observe that the belief update at stage $(n+1)$ after an action pair sequence $a^n=(a_1, \ldots, a_{n})$ can be expressed in ``matrix'' form by
            \begin{equation*}
                b_{n+1}^\top=b_1^\top T^n(a^n)
                    = b_1^\top P(a_1) \ldots P(a_n).
            \end{equation*}
    
        \noindent Let $P$ be a stochastic matrix and $b$ be a probability vector. We consider the following norms: $\lVert P \rVert_1\coloneqq \max_{k^\prime\in \mathcal{K}}\sum_{k\in \mathcal{K}} |p_{k,k^\prime}|$, $\lVert P\rVert_\infty \coloneqq \max_{k\in \mathcal{K}}\sum_{k^\prime\in \mathcal{K}}|p_{k,k^\prime}|$, and $\lVert b \rVert_1\coloneqq \sum_{k\in \mathcal{K}} |b(k)|$.
            
        \paragraph{Step 1.}\label{ConstAbMDP} We proceed to construct a finite-state stochastic game, termed \textit{abstract stochastic game}. Consider $\Gamma=(\mathcal{K},\mathcal{I},\mathcal{J},p,g)$ an ergodic blind stochastic game and $\varepsilon>0$. The first statement is a consequence of Proposition \ref{UB}.

            \begin{Proposition}
                \label{upperbound}
                Let $n_0\leq\tfrac{ 3^{|\mathcal{K}|}-2^{|\mathcal{K}|+1}+1}{2}$ such that \eqref{check} is satisfied for every action pair sequence $a^{n_0}$ and let $n_\varepsilon:=n_0 \left \lceil \tfrac{\ln(\varepsilon)}{\ln (\sup_{a^{n_0}}\tau_1(T^{n_0}(a^{n_0}))) } \right\rceil$.
                Then, we have that, for every action pair sequence $a^{n_1}=(a_1,...,a_{n_1})$ with $n_1\geq n_\varepsilon$, 
                \begin{equation}
                    \tau_1(T^{n_1}(a^{n_1}))\leq \varepsilon. \label{ineq2}
                \end{equation}
            \end{Proposition}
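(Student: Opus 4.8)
The plan is to exploit the submultiplicativity of the coefficient of ergodicity $\tau_1$ established earlier, turning the uniform bound $\tau_1(T^{n_0}(a^{n_0})) < 1$ over all length-$n_0$ blocks into geometric decay for long products. First I would set $\rho := \sup_{a^{n_0}}\tau_1(T^{n_0}(a^{n_0}))$. Since the action sets $\mathcal{I}$ and $\mathcal{J}$ are finite, there are only finitely many action-pair sequences of length $n_0$, so this supremum is attained as a maximum over finitely many values, each strictly below $1$ by hypothesis \eqref{check}; hence $\rho < 1$. The degenerate case $\rho = 0$, where every length-$n_0$ product is stable, forces $\tau_1(T^{n_1})=0$ for all $n_1 \geq n_0$ and is immediate, so I may assume $0 < \rho < 1$, ensuring that $\ln\rho$ appearing in the definition of $n_\varepsilon$ is well defined and negative. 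I likewise take $\varepsilon \in (0,1)$, the case $\varepsilon \geq 1$ being vacuous as $\tau_1 \leq 1$ always.

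Next, given a sequence $a^{n_1}$ with $n_1 \geq n_\varepsilon$, I would split the product $T^{n_1}(a^{n_1}) = P(a_1)\cdots P(a_{n_1})$ into $q := \lfloor n_1/n_0 \rfloor$ consecutive blocks of exactly $n_0$ transition matrices, followed by a remainder block consisting of the leftover $r := n_1 - q\,n_0 \in [0,n_0)$ matrices. Each full block is of the form $T^{n_0}(\cdot)$ for some length-$n_0$ action-pair sequence, so its coefficient of ergodicity is at most $\rho$; the remainder block is a product of stochastic matrices and hence itself stochastic, so its coefficient of ergodicity is at most $1$. Applying submultiplicativity of $\tau_1$ across the $q+1$ blocks yields
\begin{equation*}
    \tau_1(T^{n_1}(a^{n_1})) \leq \rho^{q}\cdot 1 = \rho^{q}.
\end{equation*}

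It remains to verify $\rho^{q} \leq \varepsilon$. Writing $m := \lceil \ln(\varepsilon)/\ln(\rho) \rceil$ so that $n_\varepsilon = n_0\, m$, the hypothesis $n_1 \geq n_\varepsilon$ gives $q = \lfloor n_1/n_0 \rfloor \geq m$, and since $0 < \rho < 1$ the map $k \mapsto \rho^{k}$ is decreasing, whence $\rho^{q} \leq \rho^{m}$. Finally, from $m \geq \ln(\varepsilon)/\ln(\rho)$ together with $\ln(\rho) < 0$ (which reverses the inequality upon multiplication) I obtain $m\ln(\rho) \leq \ln(\varepsilon)$, so that $\rho^{m} = e^{m\ln(\rho)} \leq e^{\ln(\varepsilon)} = \varepsilon$. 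Chaining the bounds gives $\tau_1(T^{n_1}(a^{n_1})) \leq \rho^{q} \leq \rho^{m} \leq \varepsilon$, which is exactly \eqref{ineq2}.

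I expect no serious obstacle: the entire argument is a routine geometric-decay estimate once submultiplicativity is in hand. The only points demanding care are the sign bookkeeping around the logarithms — both $\ln\rho$ and $\ln\varepsilon$ are negative, so the ceiling bound must be manipulated with the inequality direction reversed — and the verification that $\rho < 1$ is a genuine strict bound, which relies crucially on the finiteness of the action sets so that the supremum over length-$n_0$ sequences is a finite maximum rather than an infimum possibly equal to $1$.
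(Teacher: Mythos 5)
Your proof is correct and follows essentially the same route as the paper's: take the maximum $\rho = \max_{a^{n_0}}\tau_1\bigl(T^{n_0}(a^{n_0})\bigr) < 1$ over the finitely many length-$n_0$ blocks, apply submultiplicativity of $\tau_1$ block by block, and conclude with the geometric estimate $\rho^{\lceil \ln\varepsilon/\ln\rho\rceil}\leq\varepsilon$. You are in fact slightly more thorough than the paper, which verifies the bound only for sequences of length exactly $n_\varepsilon$ and then asserts that "the result follows," whereas you explicitly handle arbitrary $n_1 \geq n_\varepsilon$ via the floor/remainder decomposition (bounding the leftover block's coefficient by $1$) and also dispose of the degenerate cases $\rho = 0$ and $\varepsilon \geq 1$.
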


            \noindent When the context is clear, we denote $n_\varepsilon$ simply by $n$, omitting the dependence on $\varepsilon$.

            \begin{proof}[Proof of Proposition \ref{upperbound}]
                By Proposition \ref{UB}, a blind stochastic game is ergodic if and only if there exists $n_0\leq ( 3^{|\mathcal{K}|}-2^{|\mathcal{K}|+1} + 1) / 2$ such that for every action pair sequence $a^{n_0}$ we have
                \begin{equation*}
                    \tau_1(T^{n_0}(a^{n_0}))<1.
                \end{equation*}
                Consider the set of products of stochastic matrices $T^{n_0}(a^{n_0})$ where $a^{n_0}$ is every action pair sequence of length $n_0$. 
                Denote by $\overline{a}^{n_0}$ the sequence of action pairs of length $n_0$ that maximizes $\tau_1(T^{n_0}(a^{n_0}))$, i.e., $\overline{a}^{n_0}\coloneqq\textrm{argmax}_{a^{n_0}}\tau_1(T^{n_0}(a^{n_0}))$ and $\overline{\tau}(n_0)\coloneqq \tau_1(T^{n_0}(\overline{a}^{n_0}))$, with $\overline{\tau}(n_0)<1$. 
                For every $\varepsilon>0$ and taking $\overline{n}(n_0,\varepsilon) \coloneqq \left\lceil \ln(\varepsilon) / \ln (\overline{\tau}(n_0)) \right\rceil$, we have
                \begin{align*}
                    [\overline{\tau}(n_0)]^{\overline{n}(n_0,\varepsilon)}\leq \varepsilon.
                \end{align*}
                Moreover, for all action pair sequences of length $\overline{n}(n_0,\varepsilon)n_0$, it holds that
                \begin{equation*}
                    \tau_1\left(T^{\overline{n}(n_0,\varepsilon)n_0}\left(a^{\overline{n}(n_0,\varepsilon)n_0}\right)\right)\leq \left[\overline{\tau}(n_0)\right]^{\overline{n}(n_0,\varepsilon)},
                \end{equation*}
                by submultiplicativity of the coefficient of ergodicity $\tau_1$. Therefore, the result follows.
            \end{proof} 
            
            \noindent Given $\varepsilon>0$, define $\mathcal{T}(\varepsilon)$ as the finite set of forward products of transition matrices satisfying \eqref{ineq2} of length $n$, where $n$ is given by Proposition \ref{upperbound}.  In particular, we have that, for all $T^n(a^n) \in \mathcal{T}(\varepsilon)$, 
            \begin{equation*}
                \tau_1(T^n(a^n))\leq \varepsilon.
            \end{equation*}
    
            We construct an ``abstract" set of stable matrices, denoted by $\widetilde{\mathcal{T}}(\varepsilon)$, which approximates $\mathcal{T}(\varepsilon)$.
            Each matrix is approximated by another with equal rows corresponding to the average over each row. 
            Formally, for every action pair sequence $a^n\in (\mathcal{I}\times \mathcal{J})^n$, consider the matrix $T^n(a^n)\in \mathcal{T}(\varepsilon)$.
            We define $\widetilde{T}^n(a^n)\in \widetilde{\mathcal{T}}(\varepsilon)$ by
            \begin{equation*}
                \tilde{t}^n_{k,k^\prime}(a^n)\coloneqq\dfrac{1}{|\mathcal{K}|}\sum_{\overline{k}=1}^{|\mathcal{K}|} t^n_{\overline{k},k^\prime}(a^n).
            \end{equation*}
            Each stable matrix $\widetilde{T}^n\in \widetilde{\mathcal{T}}(\varepsilon)$ represents a unique belief state after $n$ stages. 
            Indeed, for every initial belief $b\in\Delta(\mathcal{K})$ and matrix $\widetilde{T}^n\in \widetilde{\mathcal{T}}(\varepsilon)$, the belief update is given by 
            \begin{align*}
                b^\prime(k^\prime) = b^\top (\widetilde{T}^n)_{k^\prime} = \sum_{k\in\mathcal{K}} b(k)\tilde{t}^n_{k,k^\prime},
            \end{align*} 
            for each $k^\prime\in\mathcal{K}$. By definition of stable matrices, the transition probabilities $\tilde{t}^n_{k, k'}$ are constant across all rows $k\in \mathcal{K}$ for each column $k^\prime\in \mathcal{K}$. Thus, the belief update becomes a convex combination of terms with equal values. Consequently, the belief update is independent of the initial belief and depends solely on the terms of the stable matrix $\widetilde{T}^n$.
            Therefore, the stable property of stochastic matrices is crucial for ensuring a finite state space in the abstract stochastic game.
    
            \textrm{}
            
            Consider $b_1\in\Delta(\mathcal{K})$ an initial belief and $\Gamma=(\mathcal{K},\mathcal{I},\mathcal{J},p,g)$ an ergodic blind stochastic game. Define the set of abstract beliefs by
            \begin{equation}
                    \mathcal{B}^\ast\coloneqq\left\{b^\ast\in \Delta(\mathcal{K}) \,|\, \exists \widetilde{T}^n\in \widetilde{\mathcal{T}}(\varepsilon)\textrm{ such that } b^\ast=b_1^\top\widetilde{T}^n\right\}\cup\{b_1\}.\label{beliefabs}
            \end{equation}
            Note that the set of abstract beliefs $\left\{b^\ast\in \Delta(\mathcal{K}) \,|\, \exists \widetilde{T}^n\in \widetilde{\mathcal{T}}(\varepsilon)\textrm{ such that } b^\ast=b_1^\top\widetilde{T}^n\right\}$ is independent of the initial belief $b_1\in \Delta(\mathcal{K})$. For $m\in [0..n-1]$, we will write $\mathcal{B}^\ast\times(\mathcal{I}\times \mathcal{J})^m\coloneqq \{(b^\ast,a_1,\ldots,a_m)\,|\,b^\ast\in \mathcal{B}^\ast\}$. 
            The abstract stochastic game, denoted by $\mathcal{G}^\ast(b_1,\varepsilon)$, is defined by a $5$-tuple $\mathcal{G}^\ast(b_1,\varepsilon)=(\mathcal{X},\mathcal{I},\mathcal{J},\overline{p}^\ast,\overline{g}^\ast)$, where:
            \begin{itemize}
                \item $\mathcal{X}$ is the finite set of states, defined by \begin{equation*}
                    \mathcal{X}\coloneqq \bigcup_{m=0}^{n-1}(\mathcal{B}^\ast\times(\mathcal{I}\times \mathcal{J})^m);
                \end{equation*}
                \item $\mathcal{I}$ and $\mathcal{J}$ are the finite sets of actions for Player $1$ and Player $2$, respectively;
                \item $\overline{p}^\ast \colon \mathcal{X}\times\mathcal{I}\times \mathcal{J}\to \mathcal{X}$ is the deterministic transition function that gives the successor state according to current state $x$ and action pair $(i,j)\in \mathcal{I}\times \mathcal{J}$; 
                \item $\overline{g}^\ast \colon \mathcal{X}\times \mathcal{I}\times \mathcal{J}\to[0,1]$ is the stage reward function.
            \end{itemize}
    
            Define $\text{proj}\colon\mathcal{X}\to\Delta(\mathcal{K})$ the function that assigns a belief state to each state of the abstract stochastic game. Given $x\in\mathcal{X}$, the function $\text{proj}$ is defined by
            \begin{equation*}
                \text{proj}(x)(k) \coloneqq
                \left\{
                    \begin{array}{ll}
                        b^\ast(k) & \mbox{ if } x=(b^\ast) \\
                        \sum_{\overline{k}\in \mathcal{K}} b^\ast(\overline{k}) t^m_{\overline{k},k}(a_1,...,a_m) & \mbox{ if }x=(b^\ast,a_1,...,a_m),
                    \end{array}
                \right.
            \end{equation*}  
            where $T^m(a^m)=T^m(a_1,...,a_m)=P(a_1)\ldots P(a_m)$ for $m\in [1..n-1]$.\\
    
            Given $x\in \mathcal{X}$, where $x$ is of the form $(b^\ast,a^{m})$ for $m\in[0..n-1]$, and an action pair $a\in\mathcal{I}\times \mathcal{J}$, define the \textit{abstract update} as
            \begin{equation*}
                \psi^\ast(x,a)\coloneqq 
                \left\{
                    \begin{array}{ll}
                        (b^\ast,a_1,\cdots,a_m,a) & \mbox{ if } m\in[0,n-2] \\
                        (\text{proj}(x)^\top\widetilde{T}^n(a^n)) & \mbox{ if }m=n-1,
                    \end{array}
                \right.
            \end{equation*}
            where $a^n=(a^{n-1},a)$. The \textit{abstract update} will compute the deterministic successor state $x^\prime\in\mathcal{X}$ given the current state $x$ and the action pair $a\in\mathcal{I}\times \mathcal{J}$. 
            
            Define the \textit{abstract transition function} as 
            \begin{align}
                \overline{p}^\ast(x^\prime|x,a)\coloneqq  
                \left\{
                \begin{array}{ll}
                    1 & \mbox{if } x^\prime=\psi^\ast(x,a) \label{transabs}\\
                    0 & \mbox{otherwise,}
                \end{array}
                \right.
            \end{align}
            where $x,x^\prime\in\mathcal{X}$ and $a\in\mathcal{I}\times \mathcal{J}$.\\
            
            For every state $x\in \mathcal{X}$ and action pair $(i,j)\in \mathcal{I}\times \mathcal{J}$, the \textit{abstract reward function} is defined by
            \begin{equation*}
                \overline{g}^\ast(x,i,j)
                    \coloneqq \sum_{k\in\mathcal{K}}\text{proj}(x)(k)\cdot g(k,i,j).
            \end{equation*}
            For each stage $m\geq 1$, let $\overline{G}^\ast_m\coloneqq \overline{g}^\ast(X_m,A_m)$ denote the stage reward function, where $X_m\in\mathcal{X}$ and $A_m\in\mathcal{I}\times \mathcal{J}$. Let $N\in \mathbb{N}^\ast$. The $N$-stage objective of the abstract stochastic game given by strategy pair $(\sigma,\pi)$ is defined by
            \begin{equation*}
                \gamma_{N,\varepsilon}^{b_1,\ast}(\sigma,\pi)\coloneqq \mathbb{E}_{\sigma,\pi}^{b_1}\left(\dfrac{1}{N}\sum_{m=1}^N \overline{G}^\ast_m\right),
            \end{equation*} 
            and the $N$-stage value, denoted by $v_{N,\varepsilon}^\ast(b_1)$, is defined by
            \begin{equation*}
                v_{N,\varepsilon}^\ast(b_1)\coloneqq\sup_{\sigma\in \Sigma}\inf_{\pi\in \Pi}\gamma_{N,\varepsilon}^{b_1,\ast}(\sigma,\pi)=\inf_{\pi\in \Pi}\sup_{\sigma\in \Sigma}\gamma_{N,\varepsilon}^{b_1,\ast}(\sigma,\pi).
            \end{equation*}
            
            \noindent Recall that for stochastic games with finite states and finite action sets, Mertens and Neyman proved that the uniform value exists \cite{mertens1981stochastic}.
            
            \begin{remark}
                Abstract stochastic games consist in a collection of stochastic games indexed by the initial belief of the original game.
            \end{remark}
                    
            \paragraph{Step 2.} We analyze the belief dynamics within an ergodic blind stochastic game and its corresponding abstract stochastic game, proving that they remain closely aligned.

            \begin{Lemma}\label{beliefPOMDPs}
                Let $b_1\in \Delta(\mathcal{K})$ be an initial belief, $\Gamma$ an ergodic blind stochastic game, and $\varepsilon>0$.
                For all $m\in\mathbb{N}^\ast$, strategy pairs $(\sigma,\pi) \in \Sigma\times\Pi$, and histories $h_{m}\in \mathcal{H}_m$, the states of the abstract stochastic game $\mathcal{G}^\ast(b_1,\varepsilon)$ satisfy
                \begin{equation*}
                    \left \| b_{m,\sigma,\pi}^{b_1,h_m} - \textnormal{proj}\left(x_{m,\sigma,\pi}^{b_1,h_m}\right) \right \|_1 \leq 4\varepsilon,
                \end{equation*}
                where $x_{m,\sigma,\pi}^{b_1,h_m}$ denotes the state of the abstract stochastic game at stage $m$, which is induced by the strategy pair $(\sigma,\pi)$, starting from the initial belief $b_1$, and conditioned on the realization of the history $h_m$. 
            \end{Lemma}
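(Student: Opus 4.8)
The plan is to control the $\ell_1$-distance between the true belief and the projected abstract belief \emph{block by block}, where a block has the length $n$ fixed by Proposition~\ref{upperbound}. First I would record the elementary but essential fact that, since no information about the state is ever revealed, the belief in the blind game is a deterministic function of the action history: conditioning on $H_m=h_m=(a_1,\dots,a_{m-1})$ gives $\bigl(b_{m,\sigma,\pi}^{b_1,h_m}\bigr)^\top = b_1^\top T^{m-1}(a^{m-1})$, independently of $(\sigma,\pi)$. By the definition of $\text{proj}$, the abstract state carries exactly the same action information, so both sides of the claimed inequality are functions of $b_1$ and $h_m$ alone; the strategies play no role beyond determining which histories occur with positive probability.

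Next I would set up the block decomposition. Writing $m-1=qn+\ell$ with $0\le \ell<n$, the true belief at stage $m$ is $\beta_{(q)}^\top T^{\ell}(\cdot)$, where $\beta_{(q)}$ is the true belief at the start of block $q$ and $T^{\ell}(\cdot)$ is the product of the \emph{genuine} transition matrices of the first $\ell$ action pairs of that block. The abstract projection equals $b^{\ast\top}_{(q)}\,T^{\ell}(\cdot)$ with the \emph{same} matrix $T^{\ell}(\cdot)$ (this is precisely what $\text{proj}$ unrolls inside a block), where $b^\ast_{(q)}$ is the abstract belief at the start of block $q$. Since each transition matrix is stochastic, left multiplication by it is $\ell_1$-nonexpansive on row vectors, i.e.\ $\lVert v^\top P(a)\rVert_1\le \lVert v\rVert_1$; hence the within-block error never exceeds the error present at the block boundary:
\[
    \left\lVert b_m - \text{proj}(x_m)\right\rVert_1 \;\le\; \left\lVert \beta_{(q)} - b^\ast_{(q)}\right\rVert_1 .
\]

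It therefore remains to bound the error at the block boundaries, and this is where ergodicity enters. At the end of a block the abstract update multiplies by the stable matrix $\widetilde{T}^n$, whose rows all equal the uniform average of the rows of the corresponding $T^n$; consequently $b^\ast_{(q)}$ is exactly this average, \emph{independently} of the belief entering the block. The true belief $\beta_{(q)}^\top=\beta_{(q-1)}^\top T^n$ is instead the convex combination of the rows of the same $T^n$ with weights $\beta_{(q-1)}$. Both vectors are thus convex combinations of the rows of a single matrix $T^n$ with $\tau_1(T^n)\le\varepsilon$ (Proposition~\ref{upperbound}). Writing each coordinate difference against a fixed reference row and using $\sum_{k^\prime}|t^n_{k,k^\prime}-t^n_{\overline k,k^\prime}|\le 2\tau_1(T^n)$ together with $\lVert \beta_{(q-1)}-u\rVert_1\le 2$ (where $u$ is the uniform distribution) yields $\lVert \beta_{(q)}-b^\ast_{(q)}\rVert_1\le 4\varepsilon$.

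The decisive point, and the main obstacle, is that this last bound does \emph{not} depend on the incoming error $\lVert \beta_{(q-1)}-b^\ast_{(q-1)}\rVert_1$: the stable update forgets the prior belief, so the error cannot accumulate across blocks. A naive induction over stages would instead let the error grow; recognizing that the stabilization step resets it to $O(\varepsilon)$ regardless of the past is exactly what makes a \emph{uniform} bound possible. Combining this with the nonexpansiveness step, and noting that the error is $0$ throughout the first block (where $b^\ast_{(0)}=b_1=\beta_{(0)}$), gives $\lVert b_m-\text{proj}(x_m)\rVert_1\le 4\varepsilon$ at every stage $m$, which is the assertion of the lemma.
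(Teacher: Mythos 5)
Your proof is correct and takes essentially the same route as the paper's: decomposition into blocks of length $n_\varepsilon$, $\ell_1$-nonexpansiveness of stochastic matrices to control the error inside a block, and an ergodicity-driven reset at block boundaries ($\tau_1(T^n)\leq\varepsilon$ together with the stability of $\widetilde{T}^n$), which prevents any accumulation of error across blocks. The only organizational difference is that the paper packages the boundary bound as a triangle inequality inside an explicit induction on blocks, whereas you observe directly that both the true and the abstract boundary beliefs are convex combinations of the rows of the same matrix $T^n$ — the same underlying fact.
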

           
            \begin{proof}[Proof of Lemma \ref{beliefPOMDPs}]
                Recall that the abstract stochastic game $\mathcal{G}^\ast(b_1,\varepsilon)$ is constructed as follows:
                \begin{enumerate}
                    \item By Proposition \ref{upperbound}, for all $\varepsilon>0$, there exists an integer $n$ with the associated set of matrices $\mathcal{T}(\varepsilon) = \{ T^n(a^n) \}$ satisfying that, for all action pair sequences $a^n$, 
                    $$
                        \tau_1(T^n(a^n)) \leq \varepsilon \,,
                    $$
                    i.e., each matrix in $\mathcal{T}(\varepsilon)$ has similar rows;
    
                    \item Associated with $\mathcal{T}(\varepsilon)$, we construct the abstract set of stable matrices, denoted $\widetilde{\mathcal{T}}(\varepsilon)$;
    
                    \item Each matrix in $\widetilde{\mathcal{T}}(\varepsilon)$ can be regarded as a belief;
    
                    \item In the abstract stochastic game:
                    \begin{itemize}
                        \item Using $\text{proj}$, each state $x\in\mathcal{X}$ is related to a specific belief in $\Delta(\mathcal{K})$;
                        \item The sets of actions is the same as in the original ergodic blind stochastic game.
                    \end{itemize}
                \end{enumerate}
                
                \noindent Fix a strategy pair $(\sigma,\pi)\in \Sigma\times \Pi$. We prove that, for all $m \ge 1$ and $h_m\in \mathcal{H}_m$,
                \begin{equation*}
                    \left \|b_{m,\sigma,\pi}^{b_1,h_m}-\text{proj}\left(x^{b_1,h_m}_{m, \sigma,\pi}\right)\right \|_1 \leq 4\varepsilon \,.
                \end{equation*}
                We consider blocks of size $n$.
                Let $l \geq 0$, $a^n$ an action pair sequence, and $h_{(l+1)n+1}=(h_{ln+1},a^n)$ for $h_{ln+1}\in \mathcal{H}_{ln+1}$. We recall the following relations: 
                \begin{itemize}
                    \item $b_{(l+1)n+1,\sigma,\pi}^{b_1,h_{(l+1)n+1}} = b_{ln+1,\sigma,\pi}^{b_1,h_{ln+1}\top} T^n(a^n)$;
                    \item $\text{proj}\left(x^{b_1,h_{(l+1)n+1}}_{(l+1)n+1, \sigma,\pi}\right)= \text{proj}\left(x^{b_1,h_{ln+1}}_{ln+1,\sigma,\pi}\right)^\top \widetilde{T}^n(a^n)$. \\
                \end{itemize}
                    
                We prove the claim by an induction argument on $l\in \mathbb{N}$. For clarity, we omit the dependence of $b_{m,\sigma,\pi}^{b_1,h_m}$ and $\text{proj}\left(x_{m,\sigma,\pi}^{b_1,h_m}\right)$ on $\sigma,\pi$ and $h_m$.
                
                \paragraph{Base case} We start by observing that the base case holds, i.e., when $l=0$. By construction of the abstract stochastic game, for all $m\in [0..n-1]$ and action pair sequences $a^m$, we have that
                \begin{equation*}
                    \left \| b_{m+1}^{b_1}-\text{proj}\left(x^{b_1}_{m+1}\right) \right \|_1 = 0, 
                \end{equation*}
                where $b_{m+1}^{b_1}=b_1^\top T^m(a^m)$ and $x^{b_1}_{m+1}=(b_1,a^m)$. Moreover, we can see that, for every action pair sequences $a^n$,
                \begin{align*}
                    \left \| b^{b_1}_{n+1}- \text{proj}\left(x_{n+1}^{b_1}\right) \right \|_1
                        &\leq \sum_{k=1}^{|\mathcal{K}|} \sum_{j=1}^{|\mathcal{K}|} b_1(j) \left | t^n_{j,k}(a^n)-\tilde{t}^n_{j,k}(a^n) \right |  \\
                        &= \sum_{j=1}^{|\mathcal{K}|} b_1(j)\sum_{k=1}^{|\mathcal{K}|} \left | t^n_{j,k}(a^n)-\tilde{t}^n_{j,k}(a^n) \right | \\
                        &\leq \sum_{j=1}^{|\mathcal{K}|} b_1(j) \, 2 \tau_1(T^n) 
                            \tag*{(\textrm{Def. $\tau_1$ and $\widetilde{T}^n$})}\\
                        &\leq 2\varepsilon. \tag*{(Proposition \ref{upperbound})}
                \end{align*}
    
                \paragraph{Induction step} We now assume that the claim holds for block $l$ and prove it holds for the block $l+1$. For every action pair sequence $a^n$,
                \begin{align*}
                    &\left \| b_{(l+1)n+1}^{b_1}-\text{proj}\left(x^{b_1}_{(l+1)n+1}\right) \right \|_1 \\
                        &\qquad = \left \| b_{(l+1)n+1}^{b_1}-\text{proj}\left(x^{b_1}_{ln+1}\right)^\top T^n(a^n) +\text{proj}\left(x^{b_1}_{ln+1}\right)^\top T^n(a^n) -\text{proj}\left(x^{b_1}_{(l+1)n+1}\right)\right \|_1\\
                        &\qquad \leq \left \| b_{ln+1}^{b_1\top} T^n(a^n) - \text{proj}\left(x^{b_1}_{ln+1}\right)^\top T^n(a^n) \right \|_1 \\
                        &\qquad \qquad +\left \| \text{proj}\left(x^{b_1}_{ln+1}\right)^\top T^n(a^n)-\text{proj}\left(x^{b_1}_{ln+1}\right)^\top \widetilde{T}^n(a^n) \right \|_1 \\
                        &\qquad \leq 2 \varepsilon + \left \| \text{proj}\left(x^{b_1}_{ln+1}\right)^\top T^n(a^n) - \text{proj}\left(x^{b_1}_{ln+1}\right)^\top \widetilde{T}^n(a^n) \right \|_1 
                            \tag*{(\textrm{Proposition } \ref{upperbound})}\\
                        &\qquad \leq 2\varepsilon + \sum_{k=1}^{|\mathcal{K}|} \sum_{j=1}^{|\mathcal{K}|} \text{proj}\left(x^{b_1}_{ln+1}\right)(j) \left | t^n_{j,k}(a^n)-\tilde{t}^n_{j,k}(a^n) \right | \\
                        &\qquad = 2\varepsilon + \sum_{j=1}^{|\mathcal{K}|} \text{proj}\left(x^{b_1}_{ln+1}\right)(j)\sum_{k=1}^{|\mathcal{K}|} \left | t^n_{j,k}(a^n)-\tilde{t}^n_{j,k}(a^n) \right | \\
                        &\qquad \leq 2\varepsilon + \sum_{j=1}^{|\mathcal{K}|} \text{proj}\left(x^{b_1}_{ln+1}\right)(j) \, 2 \tau_1(T^n) 
                            \tag*{(\textrm{Def. $\tau_1$ and $\widetilde{T}^n$})}\\
                        &\qquad \leq 4\varepsilon,
                \end{align*}    
                where the last inequality follows by Proposition \ref{upperbound}.
                
                We now consider the difference between each belief inside block $l+1$. For every $m\in [1..n-1]$, denote $T^m (a^m)\coloneqq P(a_1)\ldots P(a_m)$ with $a_k=(i_k,j_k)\in \mathcal{I}\times \mathcal{J}$ corresponding to the $k$-th action pair in $a^m$. We obtain that
                \begin{align*}
                    \left \| b_{ln+1+m}^{b_1}- \text{proj}\left(x^{b_1}_{ln+1+m}\right) \right \|_1
                    &= \left \| \left(b_{ln+1}^{b_1}-\text{proj}\left(x^{b_1}_{ln+1}\right)\right)^\top T^m \right \|_1\\
                    &\leq \sum_{k\in\mathcal{K}} \left|\left(\left(b_{ln+1}^{b_1}-\text{proj}\left(x^{b_1}_{ln+1}\right)\right)^\top T^m \right)_k\right|\\
                    &\leq \left \| b_{ln+1}^{b_1}-\text{proj}\left(x^{b_1}_{ln+1}\right)\right \|_1\left \| T^m\right \|_\infty\\
                    &\leq 4\varepsilon,
                \end{align*}
                where the last inequality follows from the induction hypothesis. As a result, it follows from the induction argument that the claim holds.
            \end{proof}
        
            \paragraph{Step 3.} Knowing that beliefs remain close in the belief and abstract stochastic games, we prove that the difference between the average rewards are also close.
        
            \begin{Lemma}\label{cvgmPOMDPs}
                Let $b_1\in \Delta(\mathcal{K})$ be an initial belief, $\Gamma$ an ergodic blind stochastic game, and $\varepsilon>0$.
                For all $N\in \mathbb{N}^\ast$ and strategy pairs $(\sigma,\pi) \in \Sigma\times \Pi$, the reward of the abstract stochastic game $\mathcal{G}^\ast(b_1,\varepsilon)$ satisfies
                \begin{equation*}
                    \left|\mathbb{E}_{\sigma,\pi}^{b_1}\left(\dfrac{1}{N}\sum_{m=1}^{N}\overline{G}_m\right)-\mathbb{E}_{\sigma,\pi}^{b_1}\left(\dfrac{1}{N}\sum_{m=1}^{N}\overline{G}_m^\ast\right)\right|
                        \leq 4\varepsilon .
                \end{equation*}
            \end{Lemma}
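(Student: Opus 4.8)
The plan is to reduce the claim to the belief-gap estimate of Lemma \ref{beliefPOMDPs}, exploiting that both stage-reward functions are affine in the (true or abstract) belief. Recall that $\overline{g}(b,i,j)=\sum_{k\in\mathcal{K}}b(k)g(k,i,j)$ and $\overline{g}^\ast(x,i,j)=\sum_{k\in\mathcal{K}}\text{proj}(x)(k)g(k,i,j)$, so the two rewards differ only through the vectors $b$ and $\text{proj}(x)$ against which the common reward data $g(\cdot,i,j)\in[0,1]^{\mathcal{K}}$ is paired. This immediately suggests the strategy: bound the per-stage reward gap by the $\ell_1$-distance between the true belief and the projected abstract state, and then invoke Lemma \ref{beliefPOMDPs} to control that distance by $4\varepsilon$ uniformly.

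First I would establish the pointwise estimate. Fix $m\ge 1$, a history $h_m\in\mathcal{H}_m$, and an action pair $(i,j)\in\mathcal{I}\times\mathcal{J}$. Writing $b_m:=b_{m,\sigma,\pi}^{b_1,h_m}$ and $x_m:=x_{m,\sigma,\pi}^{b_1,h_m}$, the difference of stage rewards is $\overline{g}(b_m,i,j)-\overline{g}^\ast(x_m,i,j)=\sum_{k\in\mathcal{K}}\big(b_m(k)-\text{proj}(x_m)(k)\big)g(k,i,j)$. Since $0\le g(k,i,j)\le 1$ for every $k$, the absolute value of this sum is at most $\sum_{k\in\mathcal{K}}|b_m(k)-\text{proj}(x_m)(k)|=\|b_m-\text{proj}(x_m)\|_1$, which is bounded by $4\varepsilon$ by Lemma \ref{beliefPOMDPs}. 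Crucially, this bound is independent of $m$, $h_m$, and the action pair.

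Next I would pass from the pointwise bound to the statement via a coupling on the common probability space. The essential observation is that the players' strategies in $\Sigma\times\Pi$ depend only on the realized action history, and that both the belief $B_m$ and the abstract state $X_m$ are deterministic functions of that same action history $H_m$. Consequently $\overline{G}_m=\overline{g}(B_m,I_m,J_m)$ and $\overline{G}_m^\ast=\overline{g}^\ast(X_m,I_m,J_m)$ are measurable functions on a single probability space governed by $\mathbb{P}_{\sigma,\pi}^{b_1}$, driven by the same action process. The previous step then yields $|\overline{G}_m-\overline{G}_m^\ast|\le 4\varepsilon$ almost surely, so $\mathbb{E}_{\sigma,\pi}^{b_1}|\overline{G}_m-\overline{G}_m^\ast|\le 4\varepsilon$ for each $m$. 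Finally, linearity of expectation and the triangle inequality give $\big|\mathbb{E}_{\sigma,\pi}^{b_1}(\tfrac1N\sum_{m=1}^N\overline{G}_m)-\mathbb{E}_{\sigma,\pi}^{b_1}(\tfrac1N\sum_{m=1}^N\overline{G}_m^\ast)\big|\le\tfrac1N\sum_{m=1}^N\mathbb{E}_{\sigma,\pi}^{b_1}|\overline{G}_m-\overline{G}_m^\ast|\le 4\varepsilon$, as required.

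The computations here are routine; the only point requiring care is the coupling step, i.e., justifying that $\overline{G}_m$ and $\overline{G}_m^\ast$ may be compared realization-by-realization on the same law $\mathbb{P}_{\sigma,\pi}^{b_1}$. This rests on the fact that, under a fixed strategy pair and initial belief, both the true belief trajectory and the abstract-state trajectory are determined by the action history alone, so no additional randomness distinguishes the two games once the actions are fixed. With that identification in hand, the per-stage $4\varepsilon$ bound from Lemma \ref{beliefPOMDPs} transfers directly to the Cesàro averages without any loss.
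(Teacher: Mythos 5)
Your proof is correct and follows essentially the same route as the paper: both arguments exploit that, since belief and abstract-state transitions are deterministic functions of the action history, the two reward processes live on the same probability space $\mathbb{P}_{\sigma,\pi}^{b_1}$, and then bound the per-stage reward gap by $\left\| b_m - \mathrm{proj}(x_m) \right\|_1 \leq 4\varepsilon$ via Lemma \ref{beliefPOMDPs} using $g \in [0,1]$. The only difference is presentational: the paper writes the coupling out explicitly as a sum over histories weighted by the common probability $\prod_{m}\sigma(i_m|h_m)\pi(j_m|h_m)$, whereas you phrase it as an almost-sure comparison, which is equivalent.
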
    

            \begin{proof}[Proof of Lemma \ref{cvgmPOMDPs}]
                Let $N\in\mathbb{N}^\ast$, $b_1\in\Delta(\mathcal{K})$ an initial belief, and $\Gamma$ an ergodic blind stochastic game. 
                We construct the abstract stochastic game $\mathcal{G}(b_1,\varepsilon)$ as defined above.
                Recall that, the belief transitions in both games are deterministic. 
                As a consequence, for every pair of strategies $(\sigma,\pi)\in \Sigma\times \Pi$ and history $h_N\in \mathcal{H}_N$, the probability of a history in $\Gamma(b_1)$ and $\mathcal{G}(b_1,\varepsilon)$ is given by 
                \begin{align*}
                    \mathbb{P}_{\sigma,\pi}^{b_1}(H_{N}=h_{N})=\prod_{m=1}^{N-1}\sigma(i_m|h_{m})\pi(j_m|h_{m}).
                \end{align*}
                Therefore, we have that, for all $N\in \mathbb{N}^\ast$ and strategy pairs $(\sigma,\pi)\in\Sigma\times \Pi$,
                \begin{align*}
                    &\left|\mathbb{E}_{\sigma,\pi}^{b_1}\left(\dfrac{1}{N}\sum_{m=1}^{N} \overline{G}_m \right) - \mathbb{E}_{\sigma,\pi}^{b_1} \left(\dfrac{1}{N}\sum_{m=1}^{N}\overline{G}^\ast_m \right)\right|\\
                    &\qquad=\left|\sum_{h_{N+1}\in \mathcal{H}_{N+1}}\mathbb{P}_{\sigma,\pi}^{b_1}(H_{N+1}=h_{N+1})\left(\dfrac{1}{N}\sum_{m=1}^{N} \overline{g}_m(b_m,i_m,j_m)-\dfrac{1}{N}\sum_{m=1}^{N}\overline{g}^\ast_m(x_m,i_m,j_m) \right)\right|\\
                    &\qquad\leq\sum_{h_{N+1}\in \mathcal{H}_{N+1}}\mathbb{P}_{\sigma,\pi}^{b_1}(H_{N+1}=h_{N+1})\left(\dfrac{1}{N}\sum_{m=1}^{N} \left|\overline{g}_m(b_m,i_m,j_m)-\overline{g}^\ast_m(x_m,i_m,j_m)\right| \right)\\
                    &\qquad\leq\sum_{h_{N+1}\in \mathcal{H}_{N+1}}\mathbb{P}_{\sigma,\pi}^{b_1}(H_{N+1}=h_{N+1})\left(\dfrac{1}{N}\sum_{m=1}^{N}\sum_{k\in\mathcal{K}} g(k,i_m,j_m)\left|b_{m}^{b_1}(k)-\text{proj}(x^{b_1}_{m})(k)\right| \right)\\
                    &\qquad\leq\sum_{h_{N+1}\in \mathcal{H}_{N+1}}\mathbb{P}_{\sigma,\pi}^{b_1}(H_{N+1}=h_{N+1})\left(\dfrac{1}{N}\sum_{m=1}^{N}\left \| b_{m}^{b_1}-\text{proj}(x^{b_1}_{m})\right \|_1 \right)\\
                    &\qquad\leq 4 \varepsilon,\tag*{\textrm{(By Lemma \ref{beliefPOMDPs})}}
                \end{align*}
                where, for every $m\in [1..N]$, each action pair $(i_m,j_m)$ naturally corresponds to the $m$-th action pair in $h_{N+1}$. 
            \end{proof}
    
            \paragraph{Step 4.} We now prove Theorem \ref{BMDPdec}. First, we show that every ergodic blind stochastic game has a uniform value. Next, we prove that the decision version of approximating the uniform value in this class of games is decidable.
        
            \begin{proof}[Proof of Theorem \ref{BMDPdec}]
    
                We prove each statement in turn.\\

                \noindent \underline{First statement}. 
                Let $b_1\in \Delta(\mathcal{K})$ be an initial belief, $\Gamma$ an ergodic blind stochastic game, and $\varepsilon>0$. Consider the abstract stochastic game $\mathcal{G}^\ast(b_1,\varepsilon)$ as constructed previously. By Lemma \ref{cvgmPOMDPs}, for all $N\in \mathbb{N}^\ast$, we have 
                \begin{equation} \label{eq:close_payoff}
                    \left|\mathbb{E}_{\sigma,\pi}^{b_1}\left(\dfrac{1}{N}\sum_{m=1}^{N}\overline{G}_m\right)-\mathbb{E}_{\sigma,\pi}^{b_1}\left(\dfrac{1}{N}\sum_{m=1}^{N}\overline{G}_m^\ast\right)\right|
                        \leq 4\varepsilon .
                \end{equation}
                The game $\mathcal{G}^\ast(b_1,\varepsilon)$ is a stochastic game with finite state space and actions sets, where states and actions are perfectly observed. 
                Consequently, it has a uniform value, denoted by $v^*(b_1,\varepsilon)$. 
                                
                Let $v(b_1)$ be an accumulation point of the sequence $\{v^\ast(b_1,\varepsilon)\}_{\varepsilon>0}$. Let us show that both players can guarantee uniformly $v(b_1)$ in $\Gamma(b_1)$. Let $\varepsilon^\prime>0$, there exists $\varepsilon \leq \varepsilon'$ such that 
                \begin{equation} \label{eq:close_value}
                \left|v^\ast(b_1,\varepsilon)-v(b_1)\right| \leq \varepsilon^\prime. 
                \end{equation}
                By definition of the uniform value, Player 1 has a strategy $\sigma^*$ in $\mathcal{G}^\ast(b_1,\varepsilon)$ such that for some $n_0 \geq 1$, for all $N \geq n_0$, for all $\pi\in \Pi$,
                   \begin{equation*}
                \mathbb{E}_{\sigma^*,\pi}^{b_1}\left(\dfrac{1}{N}\sum_{m=1}^{N}\overline{G}_m^\ast\right)
                \geq v^*(b_1,\varepsilon)- \varepsilon^\prime.
                \end{equation*}
                Combining with \eqref{eq:close_payoff} and \eqref{eq:close_value}, we get that, for all $N \geq n_0$,
                 \begin{equation*}
                    \mathbb{E}_{\sigma^*,\pi}^{b_1}\left(\dfrac{1}{N}\sum_{m=1}^{N}\overline{G}_m\right)
                 \geq v(b_1)-6\varepsilon^\prime.
                \end{equation*}
                We deduce that Player 1 can uniformly guarantee $v(b_1)$. 
                Reversing the roles of players shows that Player 2 can uniformly guarantee  $v(b_1)$ as well. 
                Hence, $\Gamma(b_1)$ has a uniform value, equal to $v(b_1)$. 
        
                \textrm{}
        
                \noindent \underline{Second statement}. 
                Let $\varepsilon>0$, $b_1\in\Delta(\mathcal{K})$ an initial belief, and $N\in\mathbb{N}^\ast$. 
                Denote by $v_N(b_1)$ the $N$-stage value of $\Gamma$ and $v_N^\ast(b_1,\varepsilon)$ the $N$-stage value of $\mathcal{G}^\ast(b_1,\varepsilon)$. 
                By Lemma \ref{cvgmPOMDPs}, for all $\varepsilon>0$ and $N\in \mathbb{N}^\ast$, we have
                \begin{equation*}
                \left|v_N^\ast(b_1,\varepsilon)-v_N(b_1)\right| \leq 4 \varepsilon.
                \end{equation*}
                Because $\Gamma$ has a uniform value $v$, we have that $(v_N(b_1)) \xrightarrow[]{N \to \infty} v(b_1)$. 
                Similarly, because $\mathcal{G}^\ast(b_1,\varepsilon)$ has a uniform value, we get that
                $(v_{N,\varepsilon}^\ast(b_1)) \xrightarrow[]{N \to \infty} v^\ast(b_1,\varepsilon)$. 
                Therefore, it follows that 
                \begin{equation*}
                    \left|v^\ast(b_1,\varepsilon)-v(b_1)\right| \leq 4 \varepsilon.
                \end{equation*}
                The number of states and actions of $\mathcal{G}^\ast(b_1,\varepsilon)$ depends only on $\varepsilon$ and the number of states and actions of $\Gamma$. 
                Moreover, the decision version of computing the uniform value is decidable for finite-state stochastic games, see \cite{oliu2021new} for recent algorithms. 
                It follows that the decision version of approximating the uniform value in ergodic blind stochastic games is a decidable problem.
            \end{proof}
            
            Our approximation scheme is detailed in Algorithm \ref{AMBMDP}.
        
            \begin{algorithm}
                \caption{Approximation Scheme for Ergodic Blind Stochastic Games\label{AMBMDP}}
                \begin{algorithmic}[1]
                    \State \textbf{Input:} Initial belief $b_1\in\Delta(\mathcal{K})$, Blind stochastic game $\Gamma=(\mathcal{K},\mathcal{I},\mathcal{J},p,g)$, $\varepsilon>0$
                    \State Verify ergodic property of $\Gamma$
                    \State Construct the set of matrices $\mathcal{T}(\varepsilon)$ 
                    \State Derive the abstract set of matrices $\widetilde{\mathcal{T}}(\varepsilon)$ from $\mathcal{T}(\varepsilon)$ 
                    \State Construct the abstract stochastic game $\mathcal{G}^\ast(b_1,\varepsilon)$ 
                    \State Compute the uniform value $v^\ast(b_1,\varepsilon)$ of the abstract stochastic game $\mathcal{G}^\ast(b_1,\varepsilon)$
                    \State \textbf{Output:} Uniform value $v^\ast(b_1,\varepsilon)$ if $\Gamma$ is ergodic
                \end{algorithmic}  
            \end{algorithm}

    \subsection{Proof of Theorem \ref{EVU}}\label{Undec}
        
        To approach the exact problem, we consider probabilistic finite automata (PFAs) \cite{madani2003undecidability}. While PFAs are models that accept or reject strings, blind MDPs are used for solving stochastic sequential optimization problems where the decision-maker receives no information of the system. These models are tightly connected. Indeed, the alphabet in PFAs corresponds to actions in blind MDPs. Further, the notion of acceptance in PFAs corresponds to a reachability objective in blind MDPs. Due to this connection, undecidability results in PFAs also hold for blind MDPs. 
        
        \begin{Definition}[PFA \cite{chatterjee2010probabilistic}]
            A PFA, denoted by $\mathcal{M}$, is defined as a $5$-tuple $\mathcal{M}=(\mathcal{K}, \mathcal{B}, \mathcal{I}, p, \delta_{k_1})$, where: 
            \begin{itemize}
                \item $\mathcal{K}$ is the finite set of states; 
                \item $\mathcal{I}$ is the finite set of symbols; 
                \item $p \colon \mathcal{K}\times \mathcal{I}\to\Delta(\mathcal{K})$ is the probability distribution over the successor state given the current state $k\in\mathcal{K}$ and the symbol $i\in \mathcal{I}$; 
                \item $\mathcal{B}\subseteq \mathcal{K}$ is the set of nonabsorbing accepting states, where a state $k$ is absorbing if $p(k|k,i)=1$ for all $i\in \mathcal{I}$;  
                \item $\delta_{k_1}$ is the initial belief.
            \end{itemize}
        \end{Definition}
    
        \noindent A blind MDP is defined similarly as a PFA \cite{chatterjee2022finite}. Indeed, a blind MDP, denoted by $\Gamma$, is defined by a $4$-tuple $\Gamma=(\mathcal{K},\mathcal{I},p,g)$, where: $\mathcal{K}$ is the finite set of states and $\mathcal{I}$ is the finite set of actions; $p \colon \mathcal{K}\times \mathcal{I}\rightarrow\Delta(\mathcal{K})$, represented as $p(k^\prime|k,i)$, is the probabilistic transition function that gives the probability distribution over the successor states given a state $k\in \mathcal{K}$ and an action $i\in \mathcal{I}$. We represent by $P(i)$ the transition matrix for each action $i\in \mathcal{I}$; $g \colon \mathcal{K}\times\mathcal{I}\rightarrow[0,1]$ is the stage reward function; $b_1\in \Delta(\mathcal{K})$ is the \textit{initial belief}, which represents the initial probability distribution over the state space. Recall that a Markov blind MDP is such that $P(i)$ is Markov for every $i\in \mathcal{I}$. Starting from initial belief $b_1\in\Delta(\mathcal{K})$, the blind MDP is denoted by $\Gamma(b_1)$.
        
        \textrm{}

        Let $\mathbb{P}^{\delta_{k_1}}_w(K_{N+1}\in \mathcal{B})$ be the probability of acceptance of a word $w\in \mathcal{I}^N$, where $\mathbb{P}_w^{\delta_{k_1}}$ denotes the measure over $\mathcal{K}^{N+1}$ induced by $w$ when the automaton starts with initial belief $\delta_{k_1}$.
        Consider the universality problem for PFAs from \cite[Theorem 3.2]{madani2003undecidability}.
            
        \begin{Theorem}
            \label{Result: PFA undecidability}
            Given a PFA, deciding whether there exists a word with acceptance probability strictly greater than $1/2$ is undecidable.
        \end{Theorem}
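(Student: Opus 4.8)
The plan is to establish undecidability by a many-one reduction from a known undecidable problem, with the PFA's input alphabet serving as the ``indexing'' alphabet of that problem. I would start from the Post Correspondence Problem (PCP): given pairs $(u_1,v_1),\dots,(u_k,v_k)$ of words over an alphabet $\{1,\dots,b-1\}$, decide whether there is a nonempty index sequence $\sigma = i_1\cdots i_m$ with $u_{i_1}\cdots u_{i_m} = v_{i_1}\cdots v_{i_m}$. Take the PFA input alphabet to be $\{1,\dots,k\}$, so that each word read by the automaton is an index sequence $\sigma$.

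The core device is an arithmetic encoding of words by rationals. For a word $w=w_1\cdots w_\ell$ over digits in $\{1,\dots,b-1\}$, set $\mathrm{val}(w)=\sum_{j=1}^{\ell} w_j b^{-j}\in(0,1)$; excluding the digit $0$ makes $\mathrm{val}$ injective, so that $u_\sigma=v_\sigma$ if and only if $\mathrm{val}(u_\sigma)=\mathrm{val}(v_\sigma)$. First I would build, via a standard probabilistic ``digit-reading'' gadget, two sub-automata whose acceptance probabilities on input $\sigma$ equal $\mathrm{val}(u_\sigma)$ and $\mathrm{val}(v_\sigma)$: each gadget updates a single real coordinate by an affine map determined by the current symbol, and affine maps of $[0,1]$ are realizable by stochastic matrices. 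Running several such components in parallel with independent internal randomness lets conjunctions of their accept events realize products of their marginal probabilities, which is what allows the overall acceptance probability to be a quadratic (rather than merely affine) function of $\mathrm{val}(u_\sigma)$ and $\mathrm{val}(v_\sigma)$. With an appropriate mixture and a sign flip one obtains a PFA whose acceptance probability has the form $p(\sigma)=\tfrac12-c\,(\mathrm{val}(u_\sigma)-\mathrm{val}(v_\sigma))^2$ for a fixed constant $c>0$, so that $p(\sigma)\le\tfrac12$ with equality exactly when $\sigma$ is a PCP solution.

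This already yields undecidability of the non-strict question ``is there a word with acceptance probability $\ge \tfrac12$,'' and I expect the strict threshold in the statement to be the main obstacle. The difficulty is that with the quadratic penalty above the difference $\mathrm{val}(u_\sigma)-\mathrm{val}(v_\sigma)$ can be nonzero yet arbitrarily small as $m$ grows, so no fixed positive margin separates non-solutions from solutions, and one cannot read off a clean strict $>\tfrac12$ event. To recover a strict, uniformly separated threshold I would instead reduce from the halting problem of two-counter (Minsky) machines: a word encodes a purported halting computation history, and the PFA probabilistically checks local transition consistency together with counter bookkeeping. Because an incorrect history violates some check by at least one integer counter unit, every invalid word incurs a penalty bounded below by a fixed constant, whereas a valid halting history passes all checks; tuning the constants then makes $p(\sigma)>\tfrac12$ hold exactly for encodings of halting computations and $p(\sigma)\le\tfrac12$ for all other words. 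Consequently there is a word with acceptance probability strictly above $\tfrac12$ if and only if the machine halts.

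Finally I would confirm that the construction is effective: the number of states, the transition probabilities, and the threshold are all computable from the instance, so the map from instances to PFAs is a genuine reduction, which transfers undecidability and completes the argument.
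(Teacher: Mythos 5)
A preliminary remark: the paper does not prove this statement at all; it imports it verbatim from Madani, Hanks and Condon (their Theorem~3.2) and only builds on top of it the reduction from PFAs to blind MDPs. So your proposal must be compared against the proofs in the cited literature rather than against anything in the paper. Your first construction (the base-$b$ encoding $\mathrm{val}$ with digits in $\{1,\dots,b-1\}$, products via independent parallel components, mixtures and complementation realizing acceptance probability exactly $\tfrac12-c\,(\mathrm{val}(u_\sigma)-\mathrm{val}(v_\sigma))^2$) is the classical Bertoni-style argument and is correct as far as it goes; you also rightly observe that it only yields undecidability of the non-strict threshold ``$\geq\tfrac12$''.

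The gap is the pivotal claim of your second construction: that ``every invalid word incurs a penalty bounded below by a fixed constant.'' This is not merely unjustified; it is provably impossible for a one-way PFA. Checking counter bookkeeping between consecutive configurations amounts to checking relations such as ``the second unary block equals the first plus one.'' If some PFA accepted every well-formatted word $a^m\,\#\,a^{m+1}$ with probability $\geq\tfrac12+\delta$ and every other well-formatted word with probability $\leq\tfrac12-\delta$ for a fixed $\delta>0$, then (after intersecting with the regular format language, which sends ill-formatted words to probability $0$) the cutpoint $\tfrac12$ would be isolated, and by Rabin's isolated cutpoint theorem the language $\{a^m \# a^{m+1} : m\geq 0\}$ would be regular, which it is not. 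Concretely, the Freivalds-type coin-flipping test you invoke distinguishes unary counts $m$ and $m'$ only with probability gap $|2^{-m}-2^{-m'}|$, which for an off-by-one error at counter value $m$ equals $2^{-(m+1)}$ and vanishes as $m$ grows. Consequently, with a fixed bonus for reaching a halting configuration and vanishing penalties, a word encoding a \emph{fake} halting history with huge counters and a single off-by-one error is accepted with probability strictly above $\tfrac12$ even when the machine never halts, so the backward direction of your reduction fails. The known proofs repair exactly this point by scale matching: in Condon--Lipton-style constructions (the route followed by Madani et al., whom the paper cites) the halting bonus is itself engineered to decay exponentially in the word length at least as fast as the worst-case penalty, so that validity is tested at every scale and invalid words never cross $\tfrac12$. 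Alternatively, you can stay entirely within your PCP construction: for non-solutions both values are rationals with denominators at most $b^{L|\sigma|}$ ($L$ the longest tile), hence $(\mathrm{val}(u_\sigma)-\mathrm{val}(v_\sigma))^2\geq b^{-4L|\sigma|}$, and mixing in a component that accepts with probability $b^{-5L|\sigma|}$ (survive each letter with probability $b^{-5L}$), then rescaling the threshold and handling the empty word separately, converts your non-strict reduction into a strict one. Either way, the fixed-constant-margin idea must be abandoned; ``tuning the constants'' cannot save it.
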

    
        \noindent \textit{Proof Sketch.} We reduce the universality problem for PFAs to the decision problem of computing the uniform value for Markov blind MDPs.
        More formally, given a PFA $\mathcal{M}=(\mathcal{K},\mathcal{I},p,\delta_{k_1})$, we construct a Markov blind MDP $\Gamma$ as follows:
        \begin{itemize}
            \item Add a state $\hat{k}$ such that each state $k\in\mathcal{K}$ reaches $\hat{k}$ with a positive probability. Consequently, every transition matrix of $\Gamma$ is Markov. 
            \item Introduce an action \texttt{Restart} that sends any state (including $\hat{k}$) deterministically to the PFA's initial state.
        \end{itemize}
        We then prove that the value of the PFA is strictly larger than $1/2$ if and only the long-run average value of the blind MDP is strictly larger than $1/2$. Because the universality problem for PFAs is undecidable by \Cref{Result: PFA undecidability}, this equivalence transfers undecidability to the exact computation of the uniform value in Markov blind MDPs.
        
        \begin{proof}[Proof of Theorem \ref{EVU}]
            Consider a PFA $\mathcal{M}=(\mathcal{K},\mathcal{B},\mathcal{I},p,\delta_{k_1})$, where $\delta_{k_1}$ is the initial belief and $\mathcal{B}\subseteq \mathcal{K}$ the set of accepting states. 
            Given $\mathcal{M}$ and $\theta \in (0,1)$, we construct a blind MDP $\Gamma=(\mathcal{K}^\prime,\mathcal{I}^\prime,p^\prime,g)$ with $b_1=\delta_{k_1}$ as initial belief and the long-run average objective as follows:
            \begin{itemize}
                \item $\mathcal{K}^\prime=\mathcal{K}\cup \{\hat{k}\}$ is the finite set of states;
                \item $\mathcal{I}^\prime=\mathcal{I}\cup \{\texttt{Restart}\}$ is the finite set of actions;
                \item $p^\prime\colon\mathcal{K}^\prime\times \mathcal{I}^\prime\to\Delta(\mathcal{K}^\prime)$ is the probabilistic transition function, defined as follows:
                \begin{itemize}
                    \item For every action $i\in \mathcal{I}$ and state $k\in \mathcal{K}$, $p^\prime(k,i)(\hat{k})=\theta$, $p^\prime(k,i)(k^\prime)=(1-\theta)p(k,i)(k^\prime)$ for all states $k^\prime\in \mathcal{K}$, $p^\prime(\hat{k},i)(\hat{k})=1$;
                    \item When $i=\texttt{Restart}$, $p^\prime (k,\texttt{Restart})(k_1)=1$ for all states $k\in \mathcal{K}^\prime$.
                \end{itemize}
                \item $g\colon\mathcal{K}^\prime\times \mathcal{I}^\prime\to [0,1]$ is the stage reward function, defined as follows:
                \begin{itemize}
                    \item For every action $i\in \mathcal{I}$, we have $g(k',i)=1/2$ for all states $k' \in \mathcal{K'}$; 
                    \item When $i=\texttt{Restart}$, we have $g(\hat{k},\texttt{Restart})=1/2$, $g(k,\texttt{Restart})=1$ for all $k\in \mathcal{B}$, and $g(k, \texttt{Restart})=0$ for all $k\in \mathcal{K}\setminus\mathcal{B}$.
                \end{itemize}
            \end{itemize}
            
            In the blind MDP $\Gamma$, the set of accepting states $\mathcal{B}\subseteq \mathcal{K}^\prime$ is transient. 
            Note that, for all actions $i\in \mathcal{I}^\prime$, the transition matrix $P(i)$ is Markov. Therefore, the blind MDP $\Gamma$ is Markov which implies that $\Gamma$ is ergodic.
                
            Let us show that the acceptance probability of the PFA $\mathcal{M}$ is strictly greater than $1/2$ if and only if the long-run average value of the blind MDP is strictly greater than $1/2$.       
                
            Consider a PFA $\mathcal{M}$ where there exists a word $w$ of length $|w|=N$ that has an acceptance probability strictly greater than $1/2$. 
            We provide a strategy in the blind MDP $\mathcal{G}$ that guarantees a payoff strictly greater than $1/2$.  
                
            The strategy consists in repeatedly playing the actions in $w$ followed by $\texttt{Restart}$. 
            To compute the payoff it guarantees, focus on a single block. 
            Denote the strategy $\sigma$ and note that
            \begin{align*}
                &\mathbb{E}^{\delta_{k_1}}_\sigma \left( \dfrac{1}{N+1}\sum_{m=1}^{N+1} G_m \right)\\
                    &\qquad=\dfrac{1}{N+1}\left[\dfrac{N}{2}+ \mathbb{P}_w^{\delta_{k_1}}(K_{N+1}=\hat{k})\dfrac{1}{2}+\left( 1 - \mathbb{P}_w^{\delta_{k_1}}(K_{N+1}=\hat{k}) \right) \mathbb{P}_w^{\delta_{k_1}}(K_{N+1}\in \mathcal{B})\right]\\
                    &\qquad>\dfrac{1}{N+1}\left[\dfrac{N}{2}+\mathbb{P}_w^{\delta_{k_1}}(K_{N+1}=\hat{k})\dfrac{1}{2} + \left( 1 - \mathbb{P}_w^{\delta_{k_1}}(K_{N+1}=\hat{k}) \right) \dfrac{1}{2}\right]\\
                    &\qquad =1/2.
            \end{align*}
            At stage $N+2$, the state is again $k_1$. By repeating the argument on each block of size $N+1$, we obtain that this strategy guarantees strictly more than $1/2$. 
        
            Consider now that the blind MDP $\mathcal{G}$ has a long-run average value strictly greater than $1/2$.
            We show that the PFA $\mathcal{M}$ has a word with an acceptance probability strictly greater than $1/2$.
            By \cite{chatterjee2022finite}, there exists an eventually periodic strategy that guarantees strictly more than $1/2$. We claim that such a strategy should play $\texttt{Restart}$ an infinite number of times. Indeed, otherwise the game would remain in $\hat{k}$ from some stage, and the strategy would achieve payoff $1/2$, which is a contradiction.
            Since the strategy is eventually cyclic and after playing once the action $\texttt{Restart}$ the state moves to $k_1$ with probability $1$, we may consider that the strategy repeats a cycle of the form $(i_1, i_2, \ldots, i_N, \texttt{Restart})$. 
            Moreover, if there exists $m \in [1..N]$ such that $i_m = \texttt{Restart}$, the payoff the strategy guarantees is a weighted average between the payoffs that the two strategies given by the cycles $(i_1, \ldots, i_{m-1}, \texttt{Restart})$ and $(i_{m+1}, \ldots, i_{N}, \texttt{Restart})$ guarantee. 
            Therefore, repeating this argument, we may consider a strategy that repeats a cycle of the form $(i_1, i_2, \ldots, i_N, \texttt{Restart})$, where $i_m \in \mathcal{I}$ for all $m \in [1..N]$. 
            We prove that the word $w = i_1 i_2\ldots i_N$ is accepted by the PFA $\mathcal{M}$ with probability strictly larger than $1/2$.
        
            Indeed, denote the strategy $\sigma$ and note that
            \begin{align*}
                \frac{1}{2} &< \mathbb{E}^{\delta_{k_1}}_\sigma \left( \dfrac{1}{N+1}\sum_{m=1}^{N+1} G_m \right)\\
                & = (1 - \theta)^N \mathbb{P}_w^{\delta_{k_1}}\left(K_{N+1} \in \mathcal{B}\right) + \left(1 - (1 - \theta)^N\right) \dfrac{1}{2}.
            \end{align*}
            Therefore, $\mathbb{P}_w^{\delta_{k_1}}(K_{N+1} \in \mathcal{B}) > 1/2$, i.e., $w$ is accepted by $\mathcal{M}$ with probability strictly larger than $1/2$.
            By Theorem \ref{Result: PFA undecidability}, it follows that the decision version of computing the long-run average value for the class of Markov blind MDPs is undecidable. 
        \end{proof}

    \subsection{Proof of Theorem \ref{0opt}}\label{Addprop}

        In this section, we prove that the uniform value is independent of the initial belief.

        \begin{proof}[Proof of Theorem \ref{0opt}]
            Consider an ergodic blind stochastic game $\Gamma$ and arbitrary initial beliefs $b_1, b_1^\prime \in \Delta(\mathcal{K})$.
            We show that the uniform values of $\Gamma(b_1)$ and $\Gamma(b_1^\prime)$ are equal.

            By Theorem \ref{BMDPdec}, the uniform value exists in $\Gamma(b_1)$ and $\Gamma(b_1^\prime)$. 
            Consider an arbitrary $\varepsilon > 0$. 
            We construct the abstract stochastic games $\mathcal{G}^\ast(b_1,\varepsilon)$ and $\mathcal{G}^\ast(b_1^\prime,\varepsilon)$ as explained in \Cref{Dec}. 
            Consider an arbitrary strategy pair $(\sigma, \pi) \in \Sigma \times \Pi$.
            Note that it induces the same state processes, except possibly during the first $n_\varepsilon$ stages.
            Hence, for every horizon $N \geq n_\varepsilon$,
            \begin{align*}
                \left|\mathbb{E}^{b_1}_{\sigma,\pi}\left(\dfrac{1}{N}\sum_{m=1}^N\overline{G}_m^\ast\right)-\mathbb{E}^{b_1^\prime}_{\sigma,\pi}\left(\dfrac{1}{N}\sum_{m=1}^N\overline{G}_m^\ast\right)\right|\leq \dfrac{n_\varepsilon}{N}.
            \end{align*}
            Therefore, $|v^\ast_N(b_1,\varepsilon) - v^\ast_N(b_1^\prime,\varepsilon)| \leq \tfrac{n_\varepsilon}{N}$. 
            Because the uniform value in the abstract stochastic game exists \cite{mertens1981stochastic}, taking $N \to \infty$ we deduce that, 
            \begin{equation*}
                v^\ast(b_1,\varepsilon) = v^\ast(b_1^\prime,\varepsilon).
            \end{equation*}
            Finally, by construction of the abstract games, 
            \begin{align*}
                |v(b_1) - v(b_1^\prime)|
                    &\leq |v(b_1)-v^\ast(b_1,\varepsilon)| + |v^\ast(b_1,\varepsilon)-v(b_1^\prime)|\\
                    &= |v(b_1)-v^\ast(b_1,\varepsilon)| + |v^\ast(b_1^\prime,\varepsilon)-v(b_1^\prime)|\\
                    &\leq 8 \varepsilon.
            \end{align*}
            The statement follows because $b_1$, $b_1^\prime$, and  $\varepsilon > 0 $ are arbitrary. 
        \end{proof}

\section{Discussion}\label{ExtPOMDPs}

        Hidden stochastic games \cite{renault2020hidden} extend blind stochastic games by allowing players to receive signals after taking actions. This model is known by various names in the literature, including stochastic games with signals \cite{solan2016stochastic} and partially observable stochastic games \cite{hansen2004dynamic}. The term ``Partially observable stochastic game" is especially prevalent in formal method problems \cite{chatterjee2014partial}. 

        \begin{Definition} 
            A hidden stochastic game, denoted by $\Gamma$, is defined by a $7$-tuple $\Gamma=(\mathcal{K},\mathcal{I},\mathcal{J},\mathcal{S},p,q,g)$, where:
            \begin{itemize}
                \item $\mathcal{K}$ is the finite set of states;
                \item $\mathcal{I}$ and $\mathcal{J}$ are the finite sets of actions for Player 1 and Player 2, respectively;
                \item $\mathcal{S}$ is the finite set of signals;
                \item $p \colon \mathcal{K}\times \mathcal{I}\times \mathcal{J}\rightarrow\Delta(\mathcal{K})$, represented as $p(k^\prime|k,i,j)$, is the probabilistic transition function that gives the probability distribution over the successor states given a state $k\in \mathcal{K}$ and action pair $(i,j)\in \mathcal{I}\times \mathcal{J}$. We represent by $P(i,j)$ the transition matrix.
                \item $q \colon \mathcal{K}\times \mathcal{I}\times \mathcal{J}\rightarrow\Delta(\mathcal{S})$, expressed as $q(s|k,i,j)$, is the probabilistic observation function that gives the probability distribution over the observations given a state $k\in \mathcal{K}$ and action pair $(i,j)\in \mathcal{I}\times \mathcal{J}$. The observation matrix corresponding to any action pair $(i,j)\in\mathcal{I}\times\mathcal{J}$ is denoted by $Q(i,j)$; 
                \item $g \colon \mathcal{K}\times\mathcal{I}\times \mathcal{J}\rightarrow[0,1]$ is the stage reward function.
            \end{itemize}
        \end{Definition}
        
        For every action pair $(i,j)\in\mathcal{I}\times \mathcal{J}$ and signal $s\in\mathcal{S}$, define the $|\mathcal{K}|\times |\mathcal{K}|$ matrix $R(i,j,s)$ such that $r_{k,k^\prime}(i,j,s)\coloneqq q(s|k^\prime,i,j)\times p(k^\prime|k,i,j)$ for all $k,k^\prime$. Let $\mathcal{R}$ denote the set of sub-stochastic matrices such that $\mathcal{R}\coloneqq\left\{R(i,j,s)\, | \,(i,j)\in\mathcal{I}\times\mathcal{J},s\in\mathcal{S}\right\}$. 
        We say that a sub-stochastic matrix $R$ is scrambling if for every two rows, there exists a common successor, i.e., a column with a positive entry in both rows. 
        Then, we can generalize the class of scrambling blind stochastic games to hidden stochastic games as follows.
                
        \begin{Definition}
            A hidden stochastic game is scrambling if every matrix in $\mathcal{R}$ is scrambling.
        \end{Definition} 
        \noindent Because Markov blind stochastic games form a subclass of scrambling hidden stochastic games, Theorem \ref{EVU} establishes that computing the uniform value of scrambling hidden stochastic games, if it exists, is undecidable. This naturally raises two questions: whether the uniform value actually exists and whether approximating it could be a decidable problem. These questions remain an open avenue for exploration.

        \textrm{} 

        We outline the main obstacles in extending methods for ergodic blind stochastic games to hidden stochastic games. Recall that the main approach for both blind stochastic games and hidden stochastic games involves considering their corresponding belief stochastic games. In blind stochastic games, transitions in the belief game are deterministic, whereas in hidden stochastic games, they are stochastic, influenced by random signals. The proof of decidability for ergodic blind stochastic games depends on a ``perfect coupling" between the ergodic blind stochastic game and the abstract stochastic game, enabled by deterministic transitions. This property facilitates a straightforward comparison between the two games. However, in hidden stochastic games, the stochastic transitions could lead to diverging paths between the hidden and abstract games, and thus an error propagation between the two, as previously noted by Rosenberg et al. \cite{rosenberg2002blackwell}. Finally, proving undecidability in scrambling hidden stochastic games could be challenging, as existing undecidability results typically rely on the blind single-player case.\\

        To conclude, our results are summarized in Table \ref{summary}.
        
        \begin{table}[ht!]
            \centering
            \renewcommand{\arraystretch}{1.2}
            \setlength{\tabcolsep}{4pt} 
            \small 
            \begin{tabular}{lccccc}
            \toprule
            \textbf{Class} &\textbf{ Uniform Value }&\textbf{ Exact Prob.} & \textbf{Approx. Prob.} & \textbf{Constant} &\textbf{ Suff. Cond.} \\
            \midrule
            Ergodic Blind MDPs & Yes & \textbf{Undec.} & \textbf{Dec.}  & \textbf{Yes} & \textbf{Yes} \\ 
            Ergodic Blind SGs & \textbf{Yes} & \textbf{Undec.} & \textbf{Dec.} & \textbf{Yes} & \textbf{Yes} \\
            Scrambling Hidden SGs & ? & \textbf{Undec.} & ? & ? & \textbf{Yes} \\
            \bottomrule
            \end{tabular}
            \caption{A question mark (?) denotes remaining open problems, while the new results presented in this paper are highlighted in bold. The existence of the uniform value for blind MDPs was established by Rosenberg et al. \cite{rosenberg2002blackwell}.}
            \label{summary}
        \end{table}

\section*{Acknowledgements} 

    This material is based upon work supported by the ANRT under the French CIFRE Ph.D program, in collaboration between NyxAir (France) and Paris-Dauphine University (Contract: CIFRE N° 2022/0513), by the French Agence Nationale de la Recherche (ANR) under reference ANR-21-CE40-0020 (CONVERGENCE project) and ANR-17-EURE-0010 (Investissements d’Avenir program), and partially supported by the ERC CoG 863818 (ForM-SMArt) grant and the Austrian Science Fund (FWF) 10.55776/COE12 grant. Part of this work was done at NyxAir (France) by David Lurie. Part of this work was done during a 1-year visit of Bruno Ziliotto to the Center for Mathematical Modeling (CMM) at University of Chile in 2023, under the IRL program of CNRS. The authors thank Marco Scarsini for helpful comments.

\bibliographystyle{alpha}
\bibliography{Biblio} 

\end{document}